\documentclass[review,onefignum,onetabnum,nohypdvips]{siamart171218}

\usepackage{mathrsfs,amsfonts,amsmath,amssymb,bm,mathtools}
\usepackage{multirow,multicol}
\usepackage{algorithmic,algorithm}
\usepackage{booktabs,lscape,url,tabularx,colortbl}
\usepackage[draft]{changes}
\usepackage{setspace}
\usepackage{subfigure}%
\usepackage{tikz}
\usetikzlibrary{shapes.geometric, arrows}
\usepackage{ulem}
\usepackage[titletoc]{appendix}
\usepackage{etoolbox, xstring, mfirstuc, textcase}

\newcommand{\ba}{\bm{a}}

\newcommand{\bh}{\bm{h}}

\newcommand{\bu}{\bm{u}}
\newcommand{\by}{\bm{y}}
\newcommand{\bz}{\bm{z}}
\newcommand{\bx}{\bm{x}}
\newcommand{\bk}{\bm{k}}
\newcommand{\bmm}{\bm{m}}
\newcommand{\bj}{\bm{j}}
\newcommand{\bv}{\bm{v}}
\newcommand{\bp}{\bm{p}}

\newcommand{\bw}{\bm{w}}

\newcommand{\DbV}{\Delta \bm{V}}

\newcommand{\bbeta}{\bm{\beta}}
\newcommand{\blam}{\bm{\lambda}}
\newcommand{\bLam}{\bm{\Lambda}}

\newcommand{\bell}{\bm{\ell}}

\newcommand{\bA}{\bm{A}}

\newcommand{\bP}{\bm{P}}

\newcommand{\bL}{\bm{L}}
\newcommand{\bG}{\bm{G}}

\newcommand{\bbT}{\mathbb{T}}
\newcommand{\bbR}{\mathbb{R}}

\newcommand{\bbC}{\mathbb{C}}
\newcommand{\bbZ}{\mathbb{Z}}
\newcommand{\bbQ}{\mathbb{Q}}

\newcommand{\ellinf}{\ell^{\infty}}

\newcommand{\calP}{\mathcal{P}}

\newcommand{\calM}{\mathcal{M}}

\newcommand{\calU}{\mathcal{U}}

\newcommand{\calT}{\mathcal{T}}

\newcommand{\fraku}{\mathfrak{u}}
\newcommand{\hF}{\hat{F}}

\newcommand{\QP}{\mbox{QP}}

\newcommand\tbbint{{-\mkern -16mu\int}}

\newcommand\dbbint{{-\mkern -19mu\int}}

\newcommand\bbint{
	{\mathchoice{\dbbint}{\tbbint}{\tbbint}{\tbbint}}
}

\allowdisplaybreaks[4]
\DeclareMathOperator*{\argmin}{\mathrm{argmin}}

\newtheorem{thm}{Theorem}[section]
\newtheorem{defy}[thm]{Definition}
\newtheorem{remark}[thm]{Remark}
\newtheorem{assumption}[thm]{Assumption}

\newtheorem{example}[thm]{Example}




\title{On the approximation of quasiperiodic functions with Diophantine frequencies by periodic functions
	\thanks{Submitted to ...
				\funding{
		}
}
}
\author{Kai Jiang\thanks{
		Hunan Key Laboratory for Computation and Simulation in Science and Engineering,
		Key Laboratory of Intelligent Computing and Information Processing of Ministry of Education, School of Mathematics and Computational Science, Xiangtan University, Xiangtan, Hunan, China, 411105.
		(\email{kaijiang@xtu.edu.cn}, \email{shifengli@smail.xtu.edu.cn}).}
	\and ShiFeng Li\footnotemark[2]
	\and Pingwen Zhang\thanks{
		School of Mathematics and Statistics, Wuhan University, Wuhan, 430072, School of Mathematical Sciences, Peking University, Beijing, 100871, China.
		(\email{pzhang@pku.edu.cn}). } 
}


%

\usepackage{array}
\newcolumntype{I}{!{\vrule width 1.0pt}}
\newlength\savedwidth

\newlength\savewidth

\begin{document}
	\maketitle
	
\begin{abstract}
We present an analysis of the approximation error for a $d$-dimensional quasiperiodic function $f$ with Diophantine frequencies, approximated by a periodic function with the fundamental domain $[0,L_1)\times [0,L_2)\times \cdots \times[0,L_d)$. 
When $f$ has a certain regularity, its global behavior can be described by a finite number of Fourier components and has a polynomial decay at infinity. 
The dominant part of periodic approximation error is bounded by $O(\max_{1\leq j \leq d} L_j^{-s_j})$, where $L_j$ belongs to the best simultaneous approximation sequence and $s_j$ is the number of different irrational elements in $j$-th dimension component of Fourier frequencies, respectively.
Meanwhile, we discuss the approximation rate.
Finally, these analytical results are verified by some examples. 
\end{abstract}	
	
\begin{keywords}
Quasiperiodic functions,
Diophantine frequency,
Periodic approximation method,
Rational approximation error. 
\end{keywords}

\begin{AMS}
42A75, 65T40, 68W40, 74S25 
\end{AMS}

	\section{Introduction}
	\label{sec:intr}




Quasiperiodic systems, as a natural extension of periodic systems, are widely found in nature, materials science and physical systems, such as many-body problems, incommensurate structures, quasicrystals, polycrystalline materials, and quasiperiodic quantum systems \cite{J.F.periodic2010,Aubry1980Effects, shechtman1984metallic,Avila2009spectrum,Avila2015Sharp,Verbin2012Observation,Kraus2013Four-dimensional}.
We consider a $d$-dimensional quasiperiodic function $f(\bx)$, and aim to approximate it using a periodic function $f_{p}(\bx)$ in a finite domain. 
This subject is basic and important in the field of approximation theory. Meanwhile, this is also the core idea of periodic approximation method (PAM), which is widely used to study quasiperiodic systems \cite{A.M.2008,Damanik2014isospectral,jiang2014numerical}.
Therefore, studying the approximation error of PAM not only expands the intension of the approximation theory, but also establishes a basic theory for the application of PAM.
However, it is surprising that there is still a lack of rigorous and systematic theoretical analysis on this approximation problem.
In this work, we analyze the approximation error of multi-dimensional quasiperiodic functions with Diophantine frequencies when approximated by periodic functions.



Assume that $f(\bx)$ has the Fourier series \cite{levitan1982almost}
\begin{align}
f(\bx)=\sum_{\blam\in \bLam} a_{\blam}
e^{i 2\pi \blam\cdot\bx},~\bx\in \bbR^{d},
\label{eq:quasiperiodic-n}
\end{align}
where 
\begin{align}
a_{\blam}
=\lim_{T\rightarrow +\infty}
\frac{1}{(2T)^d}\int_{[-T,T]^d}f(\bx)e^{-i2\pi\blam\cdot \bx}\,d\bx
=\bbint f(\bx)e^{-i2\pi\blam\cdot \bx}\,d\bx
\label{eq:QPFouriercoff}
\end{align}
is the Fourier coefficient and $\bLam=\{\blam: \blam = \bP\bk,~ \bk\in\bbZ^n \}\subset \bbR^d$ is the Fourier exponent set (or called Fourier frequency set).
$\bP=(\bp_{1},\bp_{2},\cdots,\bp_{n})\in\bbR^{d\times n}$,
$n\geq d$, is the projection matrix
where $\bp_{1},\bp_{2},\cdots,\bp_{n}$ are $\mathbb{Q}$-linearly independent.

	
Let $F$ be the $n$-dimensional periodic parent function of $f$ such that $f(\bx)=F(\bP^{T}\bx)$. The convergence of the Fourier series of its parent function can be determined by the convergence of the Fourier series of quasiperiodic function, and vice versa\,\cite{Jiang2022Numerical}. Therefore, for the quasiperiodic function $f$ given in \eqref{eq:quasiperiodic-n}, $F$ can be expanded as
\begin{align*}
F(\bz)
=\sum_{\bk\in \bbZ^n} \hF_{\bk}
e^{i 2\pi \bk\cdot\bz},~\bz\in \bbT^{n}=\bbR^n/\bbZ^n,
\end{align*}
with Fourier coefficient
\begin{align}
\hF_{\bk}=\frac{1}{\vert \bbT^n\vert}
\int_{\bbT^n}e^{-i2\pi \bk\cdot\bz}F(\bz)\,d\bz.
\label{eq:ParentFouriercoff}
\end{align}
More properties of parent function $F$ can refer to \cite{Jiang2022Numerical}.
	
Based on the Birkhoff's ergodic theorem \cite{pitt1942generalizations}, we have the following useful result
\begin{theorem}[\cite{Jiang2022Numerical}]
\label{thm:object}
For a given quasiperiodic function
\begin{align*}
f(\bx)=F(\bp_1\cdot \bx,\cdots,\bp_n\cdot \bx), ~~~\bx\in\bbR^d,
\end{align*}
where $F(\bz)$ is its parent function defined on $\bbT^n$,
$\bp_1,\cdots,\bp_n$ are $\bbQ$-linearly independent, we have
\begin{align*}
a_{\blam}=\hF_{\bk},
\end{align*}
where $\blam=\bP\bk$, $\bk\in\bbZ^n$. $a_{\blam}$ and $\hF_{\bk}$ are defined in \eqref{eq:QPFouriercoff} and \eqref{eq:ParentFouriercoff}, respectively.
\end{theorem}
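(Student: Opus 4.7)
The plan is to reduce the claim to an application of Birkhoff's ergodic theorem (equivalently, Weyl equidistribution) by rewriting the $d$-dimensional mean value in \eqref{eq:QPFouriercoff} as a spatial average of a periodic function along the linear flow $\bx \mapsto \bP^{T}\bx \bmod \bbZ^n$ on $\bbT^{n}$.

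First I would substitute $f(\bx) = F(\bP^{T}\bx)$ into \eqref{eq:QPFouriercoff} and use the identity $\blam \cdot \bx = (\bP\bk)\cdot \bx = \bk \cdot (\bP^{T}\bx)$. This lets me write
\begin{equation*}
a_{\blam} = \lim_{T\to +\infty} \frac{1}{(2T)^d}\int_{[-T,T]^d} G(\bP^{T}\bx)\, d\bx,
\end{equation*}
where $G(\bz) := F(\bz)\, e^{-i2\pi \bk\cdot \bz}$ is $\bbZ^{n}$-periodic on $\bbR^{n}$. Next I would verify that the $\bbR^{d}$-action on $\bbT^{n}$ generated by the columns of $\bP^{T}$ is uniquely ergodic with respect to Lebesgue measure; this is exactly where the $\bbQ$-linear independence of $\bp_{1},\dots,\bp_{n}$ is used, via the standard Weyl criterion (a character $e^{i2\pi \bm{m}\cdot \bz}$ with $\bm{m}\in\bbZ^{n}\setminus\{\bo\}$ can only be invariant if $\bP\bm{m} = \bo$, which is ruled out).

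With unique ergodicity in hand, Birkhoff's theorem (in the continuous, multi-parameter form invoked from \cite{pitt1942generalizations}) guarantees that the Cesàro-type spatial average of $G\circ \bP^{T}$ over $[-T,T]^d$ converges to the normalized integral of $G$ over $\bbT^{n}$, so that
\begin{equation*}
a_{\blam} = \frac{1}{|\bbT^{n}|}\int_{\bbT^{n}} F(\bz)\, e^{-i2\pi \bk\cdot \bz}\, d\bz = \hF_{\bk},
\end{equation*}
which is the definition in \eqref{eq:ParentFouriercoff}.

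The main obstacle I anticipate is not the substitution step but justifying the use of ergodic averaging: one needs to confirm that $F$ (and hence $G$) is regular enough (at minimum, Riemann integrable, though the paper assumes Fourier expansions exist) for the equidistribution conclusion to apply in its continuous-averaging form, and to argue that the linear flow on $\bbT^{n}$ driven by the $d$-dimensional parameter $\bx$ is ergodic when $\bp_{1},\dots,\bp_{n}$ are only required to be $\bbQ$-linearly independent as vectors in $\bbR^{d}$. Both are standard consequences of the Weyl criterion, but they are the load-bearing analytic ingredients and should be cited or verified explicitly.
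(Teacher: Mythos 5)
Your proposal follows essentially the same route the paper indicates for this result: it is imported from \cite{Jiang2022Numerical} and attributed precisely to Birkhoff's ergodic theorem \cite{pitt1942generalizations}, i.e., averaging the $\bbZ^n$-periodic function $F(\bz)e^{-i2\pi\bk\cdot\bz}$ along the linear flow $\bx\mapsto\bP^{T}\bx$ on $\bbT^n$, whose equidistribution is exactly what the $\bbQ$-linear independence of $\bp_1,\dots,\bp_n$ guarantees via the Weyl criterion. The caveats you flag (regularity of $F$ and convergence along the specific orbit, i.e., unique ergodicity rather than a.e. convergence) are the right ones and are handled in the cited reference, so no further changes are needed.
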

According to \Cref{thm:object}, Fourier coefficients $\hF_{\bk}$ of $F$ and Fourier coefficients $a_{\blam}$ of $f$ have the same decay behavior. Consequently, if the parent function $F$ has certain regularity, it is reasonable to define the set of fundamental Fourier exponents that globally describe the behavior of $f$.
For a given positive integer $N\in \bbZ^+$, denote
\begin{align*}
K_N^n=\{\bk=(k_j)_{j=1}^n \in\bbZ^n: \, -N \leq  k_j < N \},
\end{align*}
and the fundamental Fourier exponents set of $f$ can be defined as
\begin{align*}
\bLam^d_{N}=\{\blam = \bP\bk: \bk\in K_N^n \}\subset \bLam.
\end{align*}
Obviously, the order of the set $\bLam^d_{N}$ is $\# (\bLam^d_{N})=D=(2N)^n$.
Let $\QP(\bbR^d)$ represent the space of all $d$-dimensional quasiperiodic functions.
From $\bLam^d_{N}$, we can obtain a finite dimensional linear subspace of $\QP(\bbR^d)$
\begin{align*}
S_N=\mbox{span} 
\{ e^{i2\pi\blam\cdot \bx}, \bx\in\bbR^d, \blam\in\bLam^d_{N}\}.
\end{align*}
Denote the projection operator $\calP_N: \QP(\bbR^d) \mapsto S_N$. Then we can split the quasiperiodic function $f$ into two parts
\begin{align*}
f(\bx) =\sum_{\blam_\ell \in \bLam_N^d } a_{\ell} e^{i 2\pi \blam_\ell \cdot \bx} +\sum_{\blam_\ell \in \bLam \backslash \bLam_N^d} a_{\ell} e^{i 2\pi \blam_\ell \cdot \bx}
=\calP_N f+(f-\calP_N f).
\end{align*}
From the viewpoint of $\bx$-space, a periodic function is used to approximate $\calP_N f$.
Concretely, for some vectors $\bx=(x_j)^d_{j=1},~\by=(y_j)^d_{j=1}$ and $\bz=(z_j)^d_{j=1}$ with $z_j\neq 0$, we define Hadamard product $\bx \circ\by = (x_j y_j)^d_{j=1}$ and $\bx/\bz=(x_j/z_j)^d_{j=1}$, $\vert \bz\vert=z_1z_2\cdots z_d$.
Then, for a given positive integer vector $\bL=(L_j)^d_{j=1}$, we rewrite $\calP_N f$ as
\begin{align*}
\calP_N f(\bx)
=\sum_{\ell=1}^{D} a_{\ell}
e^{i 2\pi \bv_\ell \cdot \bx/\bL},
\end{align*}
where $\bv_{\ell}=\bL\circ\blam_{\ell}$ with $\blam_{\ell}\in \bLam_N^d$.
Using a periodic function 
\begin{align}
f_{p}(\bx)
=\sum_{\ell=1}^{D} b_{\ell}
e^{i 2\pi \bh_{\ell}\cdot \bx/\bL},~~\bx\in \bbT^d= \bbR^d/(\bL\circ\bbZ^d),
\label{eq:periodic-trunction-n}
\end{align}
where $\bh_{\ell}\in\bbZ^d$ and $b_\ell$ is the Fourier coefficient, to approximate $\calP_N f$ in $\Omega=[0,L_1)\times [0,L_2)\times \cdots \times [0,L_d)$. 
	
	
In related work, Gomez \textit{et al.} applied the periodic approximation method to recover frequencies and amplitudes of a one-dimensional quasiperiodic function from regular sampling data\,\cite{G.M.S.1.2010}. Correspondingly, a special error analysis was also provided for their considered one-dimensional cases\,\cite{gomez2010collocation}.
The analysis in high-dimensional cases meet many difficulties compared to one-dimensional ones. The main challenge is that irrational frequencies in Fourier exponents may exist across different dimensions.
In this paper, we are devoted to giving a theoretical analysis of the periodic approximation problem for arbitrary dimensional quasiperiodic function.


From the viewpoint of reciprocal space,
periodic approximation problem involves using the integer vector $\bh_{\ell}$ to approximate the irrational vector $\bv_{\ell}$, which is related to the Diophantine approximation theory. For any vector $\bx=(x_j)_{j=1}^d\in\bbR^d$, let $[\bx]=([x_j])_{j=1}^d$ denote the integer vector whose element $[x_j]$ is the distance between $x_j$ and its nearest integer and $\Vert \bx \Vert_{\ellinf}=\max\limits_{1\leq j\leq d} \vert x_j\vert $. 
In \Cref{subsec:Dio-matrix}, we will demonstrate that $f_{p}$ present a good approximation to $f$ when $\bh_{\ell}=[\bv_{\ell}]$. Correspondingly, we can obtain the Diophantine inequality
\begin{align*}
\Vert \bh_{\ell}-\bv_{\ell}\Vert_{\ellinf}=\max\limits_{1\leq j\leq d}\vert v_{\ell,j}-h_{\ell,j}\vert <1/2.
\end{align*} 
Clearly, the approximation analysis of Fourier frequencies is directly related to Diophantine theory. The following Dirichlet's theorem by Dirichlet is a fundamental result in this field.
	
\begin{theorem}[Dirichlet's theorem on simultaneous approximation \cite{W.M.S.1980}]
\label{thm:Dirichletapp}
Suppose that $\alpha_1,\cdots, \alpha_s$ are $s$ real numbers. Then there are infinitely integer points $(q, p_1,\cdots,p_s)$ with $q\neq 0$ such that
\begin{align*}
\max_{1\leq j\leq s} \vert \alpha_j q- p_j  \vert <C_sq^{-1/s},
\end{align*}
where $C_s=s/(s+1)$.
\end{theorem}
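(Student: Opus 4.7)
The plan is to prove Dirichlet's simultaneous approximation theorem via the classical pigeonhole argument of Dirichlet, combined with a limiting argument to produce infinitely many solutions; the sharper constant $C_s = s/(s+1)$ will come from a geometry-of-numbers refinement. First I would fix a large integer $N$ and consider the $N^s+1$ fractional-part vectors $\bm{y}_q := (\{q\alpha_1\}, \ldots, \{q\alpha_s\}) \in [0,1)^s$ for $q = 0, 1, \ldots, N^s$. Partitioning $[0,1)^s$ into $N^s$ half-open subcubes of side $1/N$ and applying the pigeonhole principle forces two of these vectors, say $\bm{y}_{q_1}$ and $\bm{y}_{q_2}$ with $0 \le q_1 < q_2 \le N^s$, to occupy a common subcube. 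Setting $q := q_2 - q_1 \in \{1, \ldots, N^s\}$ and taking $p_j = \lfloor q_2 \alpha_j\rfloor - \lfloor q_1 \alpha_j\rfloor$ yields $\max_j|q\alpha_j - p_j| < 1/N$, and since $q \le N^s$ gives $q^{-1/s} \ge 1/N$, I obtain the basic Diophantine inequality $\max_j|q\alpha_j - p_j| < q^{-1/s}$.

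To sharpen the constant strictly below $1$ and extract $C_s = s/(s+1)$, I would switch to the geometry-of-numbers viewpoint: apply Minkowski's theorem to the unimodular lattice in $\bbR^{s+1}$ generated by $(1,-\alpha_1,\ldots,-\alpha_s)$ together with $\bm{e}_2, \ldots, \bm{e}_{s+1}$, and choose a symmetric convex box $|x_0| \le A$, $|x_j| \le \delta$ of volume $2^{s+1} A \delta^s \ge 2^{s+1}$. Minkowski then produces a nonzero lattice point $(q, q\alpha_1-p_1, \ldots, q\alpha_s-p_s)$ with $|q| \le A$ and $|q\alpha_j - p_j| \le \delta$; by the smallness of $\delta$ the component $q$ cannot vanish (otherwise all $p_j$ would be forced to $0$ for $A$ large enough). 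A tuned choice of $A$ and $\delta$, optimized so that $\delta$ is as small as possible relative to $q^{-1/s}$, yields the explicit strict bound $|q\alpha_j - p_j| < C_s q^{-1/s}$ with $C_s = s/(s+1)$ after the arithmetic clean-up.

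Finally, infinitude is essentially automatic by letting $N \to \infty$ (or $A \to \infty$): if some $\alpha_j$ is irrational then $\min_{p \in \bbZ}|q\alpha_j - p| > 0$ for each fixed $q$, so the upper bound forces the produced approximant $q_N \to \infty$, yielding infinitely many distinct tuples; if all $\alpha_j$ are rational with common denominator $b$, the trivial solutions $q = kb$, $p_j = kb\alpha_j$ for $k\in\bbN$ do the job directly. The hard part will be the Minkowski-based extraction of the explicit $C_s$: the plain pigeonhole step only delivers constant $1$, and pushing it strictly below $1$ requires the correct normalization of the convex body and a careful handling of the closed-versus-open distinction in Minkowski's theorem. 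Once that is in place, the pigeonhole picture and the limiting step are entirely routine.
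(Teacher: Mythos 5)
The paper itself gives no proof of this theorem (it is quoted from Schmidt's book), so your proposal can only be judged on its own terms, and there it has a genuine gap: the constant $C_s=s/(s+1)$ cannot be extracted from Minkowski's theorem applied to a \emph{box}. With the body $|x_0|\le A$, $\max_j|x_j|\le\delta$, Minkowski's volume condition forces $A\delta^s\ge 1$, and the only information you get about the nonzero lattice point is $|q|\le A$ and $\max_j|q\alpha_j-p_j|\le\delta$; since you have no control on how large $q$ is relative to $A$ (it may be comparable to $A$), the best uniform conclusion is $\max_j|q\alpha_j-p_j|\le\delta=A^{-1/s}\le q^{-1/s}$, i.e.\ exactly the constant $1$ (and only with $\le$). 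No ``tuned choice'' of $A,\delta$ helps: demanding $\delta<C_sA^{-1/s}$ with $C_s<1$ contradicts $A\delta^s\ge1$. So the step you yourself flag as ``the hard part'' would fail as described; your pigeonhole step and your infinitude/rational-case discussion are fine, but they too only reach constant $1$.

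The missing idea is to replace the box by a body that couples the size of $x_0$ to the size of the error, namely the double pyramid
\begin{align*}
K=\Bigl\{(x_0,x_1,\dots,x_s):\ \frac{|x_0|}{A}+\max_{1\le j\le s}\frac{|x_j|}{\delta}\le 1\Bigr\},
\end{align*}
whose volume is $2^{s+1}A\delta^s/(s+1)$, so Minkowski applies once $A\delta^s\ge s+1$. A nonzero lattice point $(q,q\alpha_1-p_1,\dots,q\alpha_s-p_s)\in K$ (with $q\ne0$ because $\delta<1$ would otherwise force all $p_j=0$) satisfies $\max_j|q\alpha_j-p_j|\le\delta(1-q/A)$, hence $q^{1/s}\max_j|q\alpha_j-p_j|\le\delta A^{1/s}\,t^{1/s}(1-t)$ with $t=q/A\in(0,1]$; the function $t^{1/s}(1-t)$ is maximized at $t=1/(s+1)$ with value $(s+1)^{-1/s}\,s/(s+1)$, and choosing $A\delta^s=s+1$ gives $\delta A^{1/s}=(s+1)^{1/s}$, so the bound is exactly $\frac{s}{s+1}q^{-1/s}$. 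With this body in place, your limiting argument for infinitude does go through (when some $\alpha_j$ is irrational, shrink $\delta$ below the smallest error among the finitely many solutions already found; the all-rational case is trivial as you say), with one further small remark needed to upgrade $\le$ to the strict inequality in the statement.
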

Denote $Y^d_D=\{\bv_1,~\bv_2,\cdots, \bv_D\}$. 
According to \Cref{thm:Dirichletapp}, there exists an increasing sequence $\{q_{j,1},~q_{j,2},\cdots\}$ such that
\begin{align*}
E_{L_j}= \sum_{\ell=1}^D\vert v_{\ell,j}-h_{\ell,j}\vert
=\sum_{\ell=1}^D \vert L_j \lambda_{\ell,j} - [L_j \lambda_{\ell,j}]\vert
\leq DC_{s_j} L_j^{-1/s_j},
\end{align*}
where $L_j\in \{q_{j,1},~q_{j,2},\cdots\}$ and $s_j$ the number of different irrational elements in $j$-th dimension of $Y^d_D$, $j=1,2,\cdots, d$.

\begin{defy}
\label{def:bestapprox}
For $j=1,2,\cdots, d$, the $j$ column increasing subsequence $\calT_j(Y^d_{D})=\{t_{j,1},~t_{j,2},\cdots\}\subset \{q_{j,1},~q_{j,2},\cdots\}$ is the best simultaneous approximation sequence of $Y^d_D$ by taking $t_{j,1}=q_{j,1}$ and $t_{j,k}=\argmin\limits_{E_{t_{j,k}}<E_{t_{j,k-1}}}
\{q_{j,\ell}\}^{\infty}_{\ell=\ell_k}$ with $t_{j,k-1}=q_{j,\ell_k}$.
\end{defy}

Assume that the first $\zeta$ Fourier exponents of $Y^d_{D}$ belong to $\bbQ^d$, the rest belongs to $\bbR^d \backslash \bbQ^d$. In fact, $\bv_j=\bh_j~(j=1,2,\cdots,\zeta)$.
Denote $Y^{d}_{\zeta}=\{\bv_1,\bv_2,\cdots,\bv_{\zeta}\}$.
Without loss of generality,
we always have $Y^{d}_{\zeta}\neq \emptyset$.
Otherwise, we can obtain a new vector $\bv_1 \in \bbQ_*^d=\bbQ^d\backslash \{\bf{0}\}$
through dividing $v_{j\ell}$ by $v_{1\ell}$, where $j=1,2,\cdots,D,~\ell=1,2,\cdots,d$ and $v_{1\ell}\neq 0$.

To analyze the approximation error, we introduce the definition of Diophantine number.

\begin{defy}[\cite{Milnor2006Dynamics}]
\label{def:diophantine number}
A real number $\alpha$ is said to be Diophantine number if for any $\tau>0$ and there exists a constant $C_a>0$ such that  
\begin{align*}
	\Big \vert \alpha-\frac{p}{q}\Big \vert\geq \frac{C_a}{q^{2+\tau}}
\end{align*}
for every rational number $p/q$.
\end{defy}
	
	From the definition of Diophantine number, we give the Diophantine condition on the fundamental Fourier exponents set $\bLam_N^d$.	
	\begin{assumption}
		\label{ass:DiopCond}
		Assume that irrational elements $\lambda_{\ell, j}$ in the Fourier exponents  
		$ \blam_{\ell}=\bP\bk\in Y^d_{D}\setminus Y^d_{\zeta}$ are Diophantine numbers.
		In particular, $\blam_{\ell}$ satisfy Diophantine condition when $p=0,~ q=\|\bk\|_{\ellinf}$ and
		\begin{align*}
			\vert \lambda_{\ell, j} \vert
			>\frac{C_a}{\|\bk \|_{\ellinf}^{2+\tau}},~~\tau>0.
		\end{align*} 	
	\end{assumption}
	%

\begin{remark}
An irrational number is either a Diophantine number or a Liouville number. The set of Liouville numbers has a Hausdorff dimension of zero \cite{Milnor2006Dynamics}. Specifically, all algebraic numbers in $\bbR\backslash\bbQ$ are Diophantine numbers \cite{R.1955}.
\end{remark}

\begin{remark}
Since $\bv = \bL\circ\blam$ and all rational elements $\lambda_j$ in $\blam$ can be transformed into integers by an appropriate choice of $\bL$, our analysis focuses on distinguishing between integers and irrational numbers in $\bv$.
\end{remark}


{\bf Main results:}
The approximation error between the quasiperiodic function and the approximated periodic function is measured by the infinity norm $\|f_{p}-f\|_{\infty} =\sup_{\bx\in \Omega}|f_{p}(\bx)-f(\bx)|$.
According to the triangle inequality, the approximation error becomes
\begin{align}
\| f_{p}-f\|_{\infty} \leq
\|f_{p}-\calP_N f\|_{\infty}
+\|\calP_N f-f\|_{\infty}.
\label{eq:error-bound-n}
\end{align}
The right terms of inequality \eqref{eq:error-bound-n} are the rational approximation error and the truncation error, respectively. The truncation error is related to the regularity of the quasiperiodic function $f$. If $f$ is $\alpha$-order derivative, the truncation error can be bounded by $O(N^{\kappa-\alpha})$ where $\alpha>\kappa>d/2$.
The rational approximation error is estimated as $O(\max_{1\leq j \leq d} L_j^{-s_j})$ where $s_j$ is the number of different irrational elements in $j$-th dimension of $Y_D^d$ and $L_j\in \calT_j(Y^d_{D})$.
The detailed analysis of the main results will be presented in \Cref{sec:proof}.

\section{Analysis}
\label{sec:proof}
	
Let $\ba=(a_{1},a_{2},\cdots,a_{n})^{T}\in\bbR^n$ and $\bA=(a_{ij})\in\bbR^{d\times n}$, denote that $\|\ba\|=\sum_{j=1}^{n}|a_{j}|$ and 
$\|\bA\|_{1}=\max\limits_{1\leq j\leq n}\sum_{i=1}^d|a_{ij}|$.
Denote $A\lesssim B$ as an estimate of the form $A<c B$ for a positive constant $c$. For any positive integer $\alpha$, the space $H^\alpha_{QP}(\bbR^d)$ comprises all quasiperiodic functions with partial derivatives order $\alpha \geq 1$ with respect to the inner product $(\cdot, \cdot)_{\alpha}$
	\begin{align*}
		(f_1, f_2)_{\alpha}
		=\bbint f_1 \bar{f}_2 d\bx 
		+ \sum_{\Vert \bmm\Vert=\alpha} \bbint \partial^{\bmm}_{\bx} f_1\cdot \overline{\partial^{\bmm}_{\bx} f_2} d\bx,
	\end{align*}
	and $\partial^{\bmm}_{\bx}=\partial^{m_1}_{x_1}\cdots\partial^{m_d}_{x_d}$.
	Then, we can define	the norm
	$\Vert f \Vert_{\alpha}^2=\sum_{\bk\in \bbZ^n}(1+\Vert \blam_{\bk}\Vert^2)^{\alpha}\vert  a_{\blam_{\bk}} \vert^2,$
	and semi-norm
	$\vert f \vert_{\alpha}^2=\sum_{\bk\in \bbZ^n}\Vert \blam_{\bk}\Vert^{2\alpha}\vert a_{\blam_{\bk}} \vert^2.$
	

Assume that the quasiperiodic function $f\in H_{QP}^\alpha(\bbR^d)$, the estimate of $\|\calP_N f-f\|_{\infty}$ has been given in \cite{Jiang2022Numerical},
\textit{i.e.,}
\begin{align*}
\|\calP_N f-f\|_{\infty} \lesssim N^{\kappa-\alpha}\vert f \vert_\alpha,
\end{align*}
where $\alpha>\kappa>d/2$. The truncation error becomes negligible when $f$ exhibits sufficient regularity. Hence, the periodic approximation error is mainly dominated by the rational approximation error.

Denote $b_{max}=\max\limits_{1 \leq \ell\leq D}\{\vert b_{\ell}\vert \}$. Next, we estimate the rational approximation error
$\|f_{p}-\calP_N f\|_{\infty}$. 
According to the definition of $f_{p}$ and $\calP_N f$, we have
\begin{align}
\| f_{p} -\calP_N f\|_{\infty}
&=\sup\limits_{\bx\in \Omega}|\calP_N f(\bx)-f_{p}(\bx)|\notag\\
&\leq \Big|\sum_{\ell=1}^{D} a_{\ell}
e^{i 2\pi \bv_\ell\cdot \bx/\bL}
-\sum_{\ell=1}^{D} b_{\ell} e^{i 2\pi \bh_\ell\cdot\bx/\bL}\Big|\notag\\
&= \Big|\sum_{\ell=1}^{D} (a_{\ell}-b_{\ell})
e^{i 2\pi \bv_\ell\cdot\bx/\bL}
-\sum_{\ell=1}^{D} b_{\ell}
(e^{i 2\pi \bh_\ell\cdot\bx/\bL}
-e^{i 2\pi \bv_\ell\cdot\bx/\bL})\Big|\notag\\
&\leq \sum_{\ell=1}^{D} \vert a_{\ell} - b_{\ell}\vert\cdot
\Big|e^{i 2\pi \bv_\ell\cdot\bx/\bL}\Big|
+\sum_{\ell=1}^{D} |b_{\ell}|\cdot
\Big|e^{i 2\pi \bv_\ell\cdot\bx/\bL}\Big|\cdot
\Big|e^{i 2\pi (\bh_\ell-\bv_\ell)\cdot\bx/\bL}-1\Big|\notag\\
&= \sum_{\ell=1}^{D} \vert a_{\ell}-b_{\ell} \vert
+\sum_{\ell=1}^{D} |b_{\ell}|\cdot
\Big|2\sin[\pi (\bh_\ell-\bv_\ell)\cdot\bx/\bL]\Big|\notag\\
&< \sum_{\ell=1}^{D} \vert a_{\ell}-b_{\ell}\vert
+2\pi b_{max} \sum_{\ell=1}^{D} \sum_{j=1}^{d} \vert h_{\ell,j}- v_{\ell,j}\vert.
\label{ineq:tf-f-1}
\end{align}
In fact, $\|\bh_\ell-\bv_\ell\|$ can be arbitrary small (see \Cref{subsec:Dio-matrix}).
Hence, the last inequality in \eqref{ineq:tf-f-1} is reasonable when $\vert h_{\ell,j}-v_{\ell,j}\vert \leq 1/2d,~j=1,2,\cdots,d$.

Denote the quasiperiodic and periodic Fourier coefficient vectors
\begin{align*}
	\by=(a_{1},a_{2},\cdots,a_{D})^{T},~~
	\by_p=(b_{1},b_{2},\cdots,b_{D})^{T},
\end{align*}
the difference is $\Delta \by=\by-\by_p$,
and
\begin{align*}
	\DbV=(\bh_1-\bv_1,\bh_2-\bv_2,\cdots,\bh_{D}-\bv_{D})
	\in \bbR^{d\times D}.
\end{align*}
Define $\Vert \DbV \Vert_{e}= \sum_{\ell=1}^{D} \sum_{j=1}^{d} \vert h_{\ell,j}- v_{\ell,j}\vert$.
Then, the inequality \eqref{ineq:tf-f-1} can be reduced to
\begin{align}
\|f_{p}-\calP_N f\|_{\infty}< \|\Delta \by\|+2\pi b_{max}\Vert \DbV\Vert_{e},
\label{ineq:tf-f-2}
\end{align}
where $\|\DbV\|_e$ is the Diophantine approximation error and $\|\Delta \by\|$ is the frequency approximation error.
Then we will estimate $\|\Delta \by\|$ and $\|\DbV\|_e$ in \Cref{subsec:approximation-error-y} and \Cref{subsec:Dio-matrix}, respectively.

\subsection{Error estimation $\Vert \Delta \bf y \Vert$}
	\label{subsec:approximation-error-y}
	
In this subsection, we will estimate the upper bound of $\Vert \Delta \by\Vert$ with the help of discrete Fourier transform (DFT). The windowed DFT with $G_j$ discretization nodes in $j$-th dimension is
\begin{align*}	
F_{f,\bL,\bG}^{\eta}(\bbeta)=
\frac{1}{\vert \bG\vert }\sum_{\bj\in K^d_{\bG} }H^{\eta }_{\bG}(\bj)
f(\bj \circ \bL/\bG)e^{-i2\pi\bbeta\cdot\bj/\bG},
\end{align*}
where $\bG = (G_{\ell})_{\ell=1}^d$,
$K^d_{\bG}=\{\bj=(j_\ell)^d_{\ell=1}\in \bbZ^d: -G_{\ell}/2\leq j_\ell\leq G_{\ell}/2-1 \}$, $\bbeta\in\bbZ^d$ and
\begin{equation}
H^{\eta}_{\bG}(\bj)=\left\{
\begin{aligned}
&\mathop{\prod}_{\ell=1}^{d}\frac{\eta !}{(2\eta -1)!!}
\left(1-\cos\frac{2\pi
j_{\ell}}{G_{\ell}}\right)^{\eta },
&&\bj\in K^d_{\bG},\\
&0,&&\mbox{otherwise}.\nonumber
\end{aligned}\right.
\end{equation}
Note that one can always choose $\bG$ such that $G_\ell\gg L_\ell$. Concretely, we require that $\calP_N f$ and $f_{p}$ are equal through the DFT with $\eta$-order Hanning windowed function
	\begin{align}
		F_{\calP_N f,\bL,\bG}^{\eta }(\bh_s)=F_{f_{p},\bL,\bG}^{\eta }(\bh_s), ~~s=1,2,\cdots,D.
		\label{eq:procedure2}
	\end{align}
This is equivalent to
\begin{align*}	
\frac{1}{\vert \bG\vert }\sum_{\bj\in K^d_{\bG}}H^{\eta }_{\bG}(\bj)
\calP_N f(\bj \circ\bL/\bG)e^{-i2\pi\bh_s\cdot\bj/\bG}
=\frac{1}{\vert \bG\vert}\sum_{\bj\in K^d_{\bG}}H^{\eta }_{\bG}(\bj)
f_{p}(\bj \circ\bL/\bG)e^{-i2\pi\bh_s\cdot\bj/\bG},
\end{align*}
where $\bh_s\in X^d_D=\{\bh_1,~\bh_2,\cdots, \bh_D\}
	\subseteq K^d_{\bG}$.
	The matrix form of \eqref{eq:procedure2} is
	\begin{align}
		M \by=M_p \by_p,
		\label{eq:matrixForm}
	\end{align}
where
\begin{align}
&M=(u_{st})\in\bbC^{D\times D},~u_{st}
=F^{\eta}_{e^{i2\pi \bv_t\cdot\bx/\bL},\bL,\bG}(\bh_s)
=\frac{1}{\vert \bG\vert}\sum_{\bj\in K^d_{\bG}}H^{\eta }_{\bG}(\bj)
e^{i2\pi(\bv_t-\bh_s)\cdot\bj/\bG},\label{dey:u}\\
&M_p=(u^p_{st})\in\bbC^{D\times D},~u^p_{st}
=F^{\eta}_{e^{i2\pi \bh_t\cdot\bx/\bL},\bL,\bG}(\bh_s)
=\frac{1}{\vert \bG\vert}\sum_{\bj\in K^d_{\bG}}H^{\eta }_{\bG}(\bj)
e^{i2\pi(\bh_t-\bh_s)\cdot\bj/\bG}.\notag
\end{align}
From \eqref{eq:matrixForm}, we can obtain the Fourier coefficient vector $\by_p$ if $\by$ is known, and vice versa.
	
\Cref{subsec:boundinvM} demonstrates that $M$ is invertible if the Fourier exponent $\bv_t$ satisfies \Cref{ass:DiopCond}, and $\bL,~\bG$ satisfy \Cref{assum:L-N}. As a result, by applying the norm property to the linear system \eqref{eq:matrixForm}, we can obtain
\begin{align*}
\Vert \Delta \by\Vert \leq  b_{max} \Vert M^{-1}\Vert_1\Vert \Vert M_p-M\Vert_e .
\end{align*}
	
In the following, we give the upper bound estimates of $\|M_p-M\|_e$ and $\|M^{-1}\|_1$ in \Cref{sub:error-M-p-n} and \Cref{subsec:boundinvM}, respectively.
	
\subsubsection{The bound of $\Vert M_p-M\Vert_e$}
	\label{sub:error-M-p-n}
	
	
	The difference between $u_{st}$ in $M$ and $u^p_{st}$ in $M_p$ can be estimated as
\begin{align*}
|u_{st}-u^p_{st}|
&=\Big|\frac{1}{\vert \bG\vert}\sum_{\bj\in K^d_{\bG} }H^{\eta }_{\bG}(\bj)e^{i2\pi(\bv_t-\bh_s)\cdot\bj/\bG}
-\frac{1}{\vert \bG\vert}\sum_{\bj\in K^d_{\bG} }H^{\eta }_{\bG}(\bj) e^{i2\pi(\bh_t-\bh_s)\cdot\bj/\bG}\Big|\\
&\leq \frac{1}{\vert \bG\vert}\sum_{\bj\in K^d_{\bG} }H^{\eta }_{\bG}(\bj)\cdot \Big|e^{i2\pi (\bh_t-\bh_s)\cdot\bj/\bG}\Big|\cdot
\Big|e^{i2\pi(\bv_t-\bh_t)\cdot\bj/\bG}-1\Big|\\
&= \frac{1}{\vert \bG\vert}\sum_{\bj\in K^d_{\bG} }H^{\eta }_{\bG}(\bj)\cdot
\Big|2\sin[\pi(\bv_t-\bh_t)\cdot\bj/\bG]\Big|\\
&\leq 2\pi \|\bv_t-\bh_t\|,
\end{align*}
where the last inequality is reasonable with $\vert h_{t,j}-v_{t,j}\vert \leq 1/2d,~j=1,2,\cdots,d$.
Consequently, we have
\begin{align*}
\|M_p-M\|_e=\sum_{t=1}^{D}
\sum_{s=1}^{D}|u^p_{st}-u_{st}|
< 2\pi D \|\DbV\|_e.
\end{align*}
Obviously, the Diophantine approximation error $\|\DbV\|_e$ controls the bound of $\|M_p-M\|_e$.
	
	
\subsubsection{The bound of $\Vert M^{-1}\Vert_1$}
\label{subsec:boundinvM}

\begin{figure}[H]
\begin{center}	
\tikzstyle{startstop} = [rectangle, rounded corners, minimum width=1cm, minimum height=1.6cm, text centered, text width=2cm, draw=black]
\tikzstyle{io} = [rectangle, rounded  corners,minimum width=0.1cm,minimum height=1cm,text centered, text width=3.8cm, draw=black]
\tikzstyle{ioo} = [rectangle, rounded  corners,minimum width=0.3cm,minimum height=1.6cm,text width=2.6cm,text centered,draw=black]
\tikzstyle{process1} = [rectangle, rounded corners, minimum width=0.01cm, minimum height=1.6cm, text centered, text width=4.8cm, draw=black]
\tikzstyle{process2} = [rectangle, rounded  corners, minimum width=0.3cm,minimum height=1.6cm,text width=2.6cm, text centered,draw=black]
\tikzstyle{decision} = [rectangle, rounded  corners, minimum
width=0.04cm,minimum height=0.1cm,text width=1.74cm, text centered,draw=white]
\tikzstyle{arrow} = [thick=50cm,->,>=stealth]
\begin{tikzpicture}[node distance=1.4cm]
minimum	\node (start) [startstop] {Coefficient matrix $M$ \eqref{dey:u}};
\node (input1) [io,right of=start,xshift=2.4cm] 
{ \small{\begin{align*}
M^{-1}=\begin{pmatrix}
I_{\zeta} & -M_{12}U^{-1}\\
0 & U^{-1}
\end{pmatrix}
\end{align*} (\Cref{M-nonsingular})}};
\node (process1) [process1,right of=input1,xshift=1.8cm]at(6,0) {\small{\begin{align*}
\|U^{-1}\|_1
\leq \frac{\|\calU^{-1}\|_1}{1-\|\calU^{-1}\|_1\cdot
\|U-\calU\|_1}
\end{align*} (\Cref{thm:bound-M-n})}};
\node (process2) [process2,right of=process1,xshift=1cm] at (4.8,-2.8){The bound of $\|U-\calU\|_1$ (\Cref{lemma:bound3-n})};
\node (out1) [ioo,right of=process1,xshift=1cm]at (8,-2.8) {The bound of $\|\calU^{-1}\|_1$ (\Cref{lemma:bound1-n})};
\node (decision) [decision,right of=start,xshift=1cm]at (7,-3) {};
\draw [arrow] (start) -- (input1);
\draw [arrow] (input1) -- (process1);
\draw [arrow] (process1.south) to node {} (out1.north);
\draw [arrow] (process1.south) to node [right=-0.6cm]{} (process2.north);
\end{tikzpicture}
\end{center}
\caption{An overview of the upper bound proof of $\|M^{-1}\|_1$.\label{fig:scftiteration}}
\end{figure}
	
This subsection proves that $\Vert M^{-1}\Vert_1$ is bounded and we give its upper bound.
For the purpose of error analysis, we introduce some required notations.
Let $\bbZ^d_*=\bbZ^d\backslash \{\bf{0}\}$ for $d\in\bbZ^+$.
Denote that $I_{0}(\bell)$ and $I_{in}(\bv)$ are index sets of 
zero and integer entries of $\bell\in \bbR^d$, respectively,
\textit{i.e.,}
\begin{align*}
I_{0}(\bell) &=\{j: \ell_j=0, ~1\leq j\leq d \},
\\
I_{in}(\bell) &=\{j: \ell_j\in\bbZ, ~1\leq j\leq d \}.
\end{align*}
For $0\leq r\leq d $, denote
\begin{align}
J_r=\{\bell\in\bbZ^d: \#I_{0}(\bell)=r \}
\subset \bbZ^d.
\label{def:J_r}
\end{align}
Obviously, $\cup^d_{r=0} J_r=\bbZ^d$. 
	

\textbf{Continuous normalized windowed Fourier transform}

The continuous normalized windowed Fourier transform (NWFT) with Hanning window function of order $\eta\in\bbZ^+$ is
\begin{align}	
\phi^{\eta }_{f,\bL}(\bw)=
\frac{1}{|\Omega|}\int_{\Omega}H^{\eta }_{\bL}(\bx)f(\bx)
e^{-i2\pi\bw\cdot\bx}\,d\bx,
\label{eq:NWFT-n}
\end{align}
where $\bw=(w_{j})^{d}_{j=1}\in\bbR^d$ and
\begin{equation}
H^{\eta }_{\bL}(\bx)=\left\{
\begin{aligned}
&\mathop{\prod}_{j=1}^{d}\frac{\eta !}{(2\eta -1)!!}
\left(1-\cos\frac{2\pi x_{j}}{L_j}\right)^{\eta },
&&\bx=(x_{j})^d_{j=1}\in\Omega,\\
&0,&&\mbox{otherwise}.\nonumber
\end{aligned}\right.
\end{equation}
Then $0\leq H^{\eta }_{\bL}(\bx)\leq (2\eta+1)^d$.
The Hanning window function has the normalization property
\begin{align}	
\frac{1}{|\Omega|}\int_{\Omega}H^{\eta }_{\bL}(\bx)\,d\bx=1.
\label{eq:H-int}
\end{align}

From equations \eqref{eq:NWFT-n} and \eqref{eq:H-int},
for a vector $\bv=(v_j)^{d}_{j=1}\in\bbR^d$, we obtain
\begin{align*}
\phi^{\eta }_{e^{i2\pi v_\ell x_\ell},L_\ell}(w_\ell)=1, ~~\ell \in I_{0}(\bv-\bw).
\end{align*}
For a given $L_{\ell}\in\bbZ^+$, if $\vert (v_{\ell}-w_{\ell})L_\ell+j_1\vert > 0$ with $-\eta\leq j_1\leq \eta$, we have
\begin{align}
\phi^{\eta }_{e^{i2\pi\bv\cdot\bx},L_{\ell}}(\bw)
=\mathop{\prod}^{d}
_{\substack{\ell=1\\ \ell\not\in I_{0}(\bv-\bw)}}
\frac{(-1)^{\eta }(\eta !)^{2}[e^{i2\pi(v_{\ell}-w_{\ell})L_{\ell}}-1]}
{i2\pi\Pi^{\eta }_{j_1=-\eta }[(v_{\ell}-w_{\ell})L_{\ell}+j_1]}.
\label{eq:NWFT-complex-exponential-n}
\end{align}
We can also give the coefficient matrix 
$\calM=(\fraku_{st})\in\mathbb{C}^{D\times D}$ in the NWFT
where
\begin{align*}
\fraku_{st}
=\phi^{\eta}_{e^{i2\pi \bv_t\cdot\bx/\bL},\bL}
(\bh_s/\bL).
\end{align*}

The relation between DFT and NWFT is given
in \Cref{lemma:relation-DFT-NWFT-n}.
	
\begin{lemma}[Relation between the DFT and the NWFT]
\label{lemma:relation-DFT-NWFT-n}
When $\eta \geq 1$, we have
\begin{align}	
F^{\eta }_{f,\bL,\bG}(\bk)
=\phi^{\eta }_{f,\bL}(\bk/\bL)
+\sum_{ \bell\in\bbZ_*^d}
\phi^{\eta }_{f,\bL}
\Big(\frac{\bk+\bell  \circ \bG}{\bL}\Big).
\label{eq:relation-DFT-NWFT}
\end{align}

\end{lemma}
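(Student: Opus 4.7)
The plan is to recognize the identity as a multidimensional aliasing (Poisson-summation-type) relation: the windowed DFT at integer frequency $\bk$ equals the sum of continuous NWFT values at all frequencies $(\bk+\bell\circ\bG)/\bL$ that fold back to $\bk/\bL$ under sampling on the $\bG$-grid.

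First, I would verify that the discrete and continuous Hanning windows agree on the grid: $H^{\eta}_\bG(\bj)=H^{\eta}_\bL(\bj\circ\bL/\bG)$, since the argument $j_\ell/G_\ell$ of the $\cos$ factor is the same in either definition. Hence the DFT can be rewritten as a discrete evaluation of the windowed quasiperiodic function $g(\bx):=H^{\eta}_\bL(\bx)f(\bx)$:
\begin{align*}
F^{\eta}_{f,\bL,\bG}(\bk)=\frac{1}{|\bG|}\sum_{\bj\in K^d_\bG} g(\bj\circ\bL/\bG)\,e^{-i2\pi\bk\cdot\bj/\bG}.
\end{align*}
Next, I would expand $g$ in its Fourier series on the $\bL$-torus, $g(\bx)=\sum_{\bmm\in\bbZ^d} c_\bmm\,e^{i2\pi\bmm\cdot\bx/\bL}$, where by the definition \eqref{eq:NWFT-n} of the NWFT the coefficients are $c_\bmm=\phi^{\eta}_{f,\bL}(\bmm/\bL)$. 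Substituting this expansion and interchanging the two summations yields
\begin{align*}
F^{\eta}_{f,\bL,\bG}(\bk)=\sum_{\bmm\in\bbZ^d}\phi^{\eta}_{f,\bL}(\bmm/\bL)\cdot\frac{1}{|\bG|}\sum_{\bj\in K^d_\bG}e^{i2\pi(\bmm-\bk)\cdot\bj/\bG}.
\end{align*}

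The inner kernel factors across the $d$ coordinates into one-dimensional geometric sums; each factor evaluates to $1$ when $m_\ell-k_\ell\equiv 0\pmod{G_\ell}$ and vanishes otherwise. Only the terms $\bmm=\bk+\bell\circ\bG$ with $\bell\in\bbZ^d$ survive, giving
\begin{align*}
F^{\eta}_{f,\bL,\bG}(\bk)=\sum_{\bell\in\bbZ^d}\phi^{\eta}_{f,\bL}\!\left(\frac{\bk+\bell\circ\bG}{\bL}\right),
\end{align*}
and separating the $\bell=\bo$ term produces the claimed identity.

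The main technical obstacle is justifying the interchange of the Fourier series and the sample-point sum, which requires absolute convergence of the aliasing series on the right-hand side. By linearity it is enough to consider a single complex exponential $f(\bx)=e^{i2\pi\bv\cdot\bx}$, in which case the explicit formula \eqref{eq:NWFT-complex-exponential-n} shows that each coordinate factor of $|\phi^{\eta}_{f,\bL}(\bw)|$ decays like $|(v_\ell-w_\ell)L_\ell|^{-(2\eta+1)}$. Applied to the aliased frequencies $\bw=(\bk+\bell\circ\bG)/\bL$, this produces a product-type bound of order $\prod_{\ell}(1+|\ell_\ell|G_\ell)^{-(2\eta+1)}$, whose summation over $\bell\in\bbZ^d$ converges precisely when $2\eta+1>1$, i.e.\ $\eta\geq 1$. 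This is exactly where the hypothesis on $\eta$ enters, and it provides the absolute convergence needed to legitimize the interchange and to make the right-hand side well-defined.
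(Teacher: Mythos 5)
Your proof is correct and is essentially the argument the paper intends: the paper omits the proof altogether, noting only that it is ``similar to the one-dimensional case'' in Brigham's book, which is exactly the aliasing/Poisson-summation argument you give (sample the windowed function, expand in a Fourier series on the $\bL$-torus so the coefficients are the NWFT values, and use orthogonality of the discrete exponentials), with your final paragraph supplying the absolute-convergence justification for $\eta\geq 1$ that the paper leaves implicit. One small point to keep consistent: since $K^d_{\bG}$ is symmetric about the origin while $H^{\eta}_{\bL}$ is supported on $[0,L_1)\times\cdots\times[0,L_d)$, the identification $H^{\eta}_{\bG}(\bj)=H^{\eta}_{\bL}(\bj\circ\bL/\bG)$ and the substitution of the Fourier series at the sample points require the $\bL$-periodic extension of the window (equivalently, re-indexing the samples to $0\leq j_\ell\leq G_\ell-1$), a convention the paper itself uses tacitly.
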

The proof of \Cref{lemma:relation-DFT-NWFT-n} is similar to the one-dimensional case presented in \cite{B.1988}.

\textbf{Rewriting the form of $M$}
	
From the definition of $\phi$, we know that
\begin{align}
\fraku_{st}=\begin{cases}
	1,~s=t,\\
	0,~s\neq t,
\end{cases}
~~\mbox{for}~~1\leq s\leq D,~1\leq t\leq \zeta.
\label{eq:calusttau}
\end{align}
According to the properties of $Y^d_{D}$,
we rewrite $M$ and $\calM$ as block matrices
\begin{align}
M=\begin{pmatrix}
	M_{11} & M_{12}\\
	M_{21} & U
\end{pmatrix},~~
\calM=\begin{pmatrix}
	I_{\zeta} & \calM_{12}\\
	0 & \calU
\end{pmatrix},
\label{dey:ucalu}
\end{align}
where $M_{11}\in\bbC^{\zeta\times \zeta}$ and
$U\in\bbC^{(D-\zeta)\times (D-\zeta)}$.
	

From the relation between DFT and NWFT,
we have the following proposition.
\begin{proposition}
\label{pro:M11M21}
Under \Cref{ass:DiopCond},
$\bL$ and $\bG$ satisfy
\begin{align}
\frac{L_{j}C_a}{(2N)^{2+\tau}}-\frac{1}{2}-\eta>0,~~
G_{j}-2L_{j}\|\bP\|_1N - \frac{1}{2}>\eta,~~j=1,2,\cdots,d.
\label{con:pro-NL}
\end{align}
Then, we have $M_{11}=I_{\zeta}$,
~$M_{21}=\bm{0}$.
\end{proposition}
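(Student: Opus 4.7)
The plan is to show that $u_{st} = \delta_{st}$ for all $1 \le s \le D$ and $1 \le t \le \zeta$, which, read through the block decomposition in \eqref{dey:ucalu}, is exactly the claim $M_{11} = I_\zeta$ and $M_{21} = \bm{0}$. The key mechanism is to pass from the discrete transform to the continuous NWFT via \Cref{lemma:relation-DFT-NWFT-n}, peel off the diagonal contribution $\fraku_{st}$ from \eqref{eq:calusttau}, and then show the aliasing tail vanishes term-by-term under the geometric gap produced by condition \eqref{con:pro-NL}.

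Concretely, I first apply \Cref{lemma:relation-DFT-NWFT-n} to $f(\bx)=e^{i2\pi \bv_t\cdot\bx/\bL}$ at the sample frequency $\bh_s$, which yields
\begin{align*}
u_{st} \;=\; \fraku_{st} \;+\; \sum_{\bell\in\bbZ^d_*} \phi^{\eta}_{e^{i2\pi\bv_t\cdot\bx/\bL},\bL}\!\Big(\frac{\bh_s+\bell\circ\bG}{\bL}\Big).
\end{align*}
For $t\le \zeta$, equation \eqref{eq:calusttau} gives $\fraku_{st}=\delta_{st}$, so the only remaining task is to kill every term in the aliasing sum.

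For the aliasing terms, I fix $\bell\in\bbZ^d_*$ and exploit $\bv_t=\bh_t\in\bbZ^d$ (valid for $t\le\zeta$), which makes each coordinate of $(\bv_t-\bh_s-\bell\circ\bG)$ an integer. The strategy is to apply the closed form \eqref{eq:NWFT-complex-exponential-n} and exhibit a single factor whose numerator vanishes. Pick any $j^*$ with $\ell_{j^*}\neq 0$. Using $\blam\in\bLam^d_N$ together with $|h_{\cdot,j}-v_{\cdot,j}|\le 1/2$ gives $|h_{s,j^*}|,|h_{t,j^*}|\le L_{j^*}\|\bP\|_1 N + 1/2$, whence
\begin{align*}
|h_{t,j^*}-h_{s,j^*}-\ell_{j^*}G_{j^*}| \;\ge\; G_{j^*} - 2L_{j^*}\|\bP\|_1 N - 1/2 \;>\; \eta
\end{align*}
by the second inequality in \eqref{con:pro-NL}. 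This simultaneously shows $j^*\notin I_0(\bv_t/\bL-(\bh_s+\bell\circ\bG)/\bL)$, so the factor at $j^*$ actually enters the product, and that every denominator term $|(h_{t,j^*}-h_{s,j^*}-\ell_{j^*}G_{j^*})+j_1|$ with $|j_1|\le\eta$ stays strictly positive, so formula \eqref{eq:NWFT-complex-exponential-n} is legitimate. Finally, the corresponding numerator is $e^{i2\pi(h_{t,j^*}-h_{s,j^*}-\ell_{j^*}G_{j^*})}-1=0$, since the exponent is a nonzero integer. Hence the whole product is zero and the aliasing sum collapses, giving $u_{st}=\fraku_{st}=\delta_{st}$.

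The only delicate step is the denominator check, and condition \eqref{con:pro-NL} is precisely designed so that shifting by any nonzero multiple of $G_{j^*}$ pushes the argument past the Hanning window's $\eta$-wide support. Note that the first inequality of \eqref{con:pro-NL} (the one involving the Diophantine constant $C_a$) and \Cref{ass:DiopCond} play no role here because the rational block only involves $t\le\zeta$; they will be invoked when the irrational block $U$ is treated in \Cref{subsec:boundinvM}. With the diagonal identification $u_{st}=\delta_{st}$ established for $1\le s\le D$ and $1\le t\le\zeta$, the block structure immediately yields $M_{11}=I_\zeta$ and $M_{21}=\bm{0}$.
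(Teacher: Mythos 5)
Your treatment of the aliasing sum is sound and follows the paper's own mechanism: write $u_{st}=\fraku_{st}+\sum_{\bell\in\bbZ^d_*}\phi^{\eta}_{e^{i2\pi\bv_t\cdot\bx/\bL},\bL}\big((\bh_s+\bell\circ\bG)/\bL\big)$ via \Cref{lemma:relation-DFT-NWFT-n}, and kill each shifted term through a coordinate $j^*$ with $\ell_{j^*}\neq 0$, where the integer offset exceeds $\eta$ thanks to the second inequality in \eqref{con:pro-NL}. One caveat on that step: verifying the denominators only at $j^*$ does not make the full product formula \eqref{eq:NWFT-complex-exponential-n} ``legitimate'' --- at a coordinate $j$ with $\ell_j=0$ the mismatch $v_{t,j}-h_{s,j}$ may be a nonzero integer of modulus $\le\eta$, where that formula is $0/0$. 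The clean fix is to argue through the coordinate-wise factorization of the NWFT into one-dimensional integrals: the order-$\eta$ Hanning window is band-limited to $[-\eta,\eta]$, so the factor at $j^*$ vanishes and with it the whole term. This is exactly why the paper first establishes the nonvanishing-denominator conditions \eqref{ineq:con-Mss} before expanding.

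The substantive error is your closing claim that \Cref{ass:DiopCond} and the first ($C_a$-) inequality in \eqref{con:pro-NL} play no role here. They are what makes the unshifted entries $\fraku_{st}$ vanish for the rows $s>\zeta$, i.e.\ what makes \eqref{eq:calusttau} true for precisely the block $M_{21}$ you need. For such $s$ all mismatches $v_{t,j}-h_{s,j}$ are integers, and if an irrational exponent is very close to a rational one (no Diophantine separation, or $L_j$ too small), every mismatch can have modulus $\le\eta$; since the order-$\eta$ window has nonzero Fourier coefficients at all integer offsets up to $\eta$, one then gets $\fraku_{st}\neq 0$, and $M_{21}$ need not vanish. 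The paper's proof devotes its part (i), inequality \eqref{bound:A0vshs}, to deriving $|v_{s,j}-h_{t,j}+j_1|>0$ (indeed a gap larger than $\eta$) from the Diophantine condition together with the first inequality of \eqref{con:pro-NL}. By offloading this onto \eqref{eq:calusttau} --- which the paper states loosely ``from the definition of $\phi$'' but which is not unconditional for $s>\zeta$ --- your proof skips the one place where those hypotheses do real work, and the assertion that they are superfluous is incorrect.
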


\begin{proof}
For $ \forall \bell\in\bbZ_*^d$, if inequalities
\begin{align}
	\label{ineq:con-Mss}
	\begin{cases}
		|v_{s,j}-h_{t,j}+j_1|>0, ~~~~~\quad\quad j\notin I_{0}(\bv_s-\bh_t)\\
		|v_{s,j}-h_{t,j}-\ell_j G_j+j_1|>0,~~j\notin I_{0}(\bell)
	\end{cases}
\end{align}
hold for $\vert j_1 \vert<\eta $.
From \Cref{lemma:relation-DFT-NWFT-n} and
$v_{s,j}-h_{s,j}-\ell_j G_j=-\ell_j G_j \in\bbZ_*$ with $1\leq s\leq \zeta$,
we can obtain
\begin{align*}
\vert u_{ss}- \fraku_{ss}\vert
&=\vert F_{e^{i2\pi\bv_s\cdot\bx/\bL},\bL,\bG}^{\eta }(\bh_s)-\phi_{e^{i2\pi\bv_s\cdot\bx/\bL},\bL}^{\eta }
(\bh_s/\bL)\vert\\
&\leq \sum_{\bell\in\bbZ_*^d}
\Big\vert\phi^{\eta }_{e^{i2\pi\bv_s\cdot\bx/\bL},\bL}
\Big(\frac{\bh_s+\bell \circ \bG}{\bL}\Big)\Big\vert\\
&=\sum_{r=0}^{d-1}
\sum_{\bell\in J_{r}}
\Bigg\vert\mathop{\prod}^{d}_{j=1,j\notin I_{0}(\bell)}
\frac{(-1)^{\eta }(\eta !)^{2}
[e^{i2\pi(v_{s,j}-h_{s,j}-\ell_j G_j)}-1]}
{i2\pi\Pi^{\eta }_{j_1=-\eta }
(v_{s,j}-h_{s,j}-\ell_j G_j+j_1)}\Bigg\vert
=0.
\end{align*}

Similarly, 
based on inequalities \eqref{ineq:con-Mss},
since $v_{t,j}-h_{s,j}\in\bbZ_*$
and $v_{t,j}-h_{s,j}-\ell_j G_j\in\bbZ_*$ with $1\leq t\leq \zeta$, we have
\begin{align*}
\vert u_{st}- \fraku_{st}\vert
&\leq \sum_{r=0}^{d-1}
\sum_{\bell\in J_{r}}
\Bigg\{
\bigg(\mathop{\prod}_{j\in I_{0}(\bell)\setminus I_{0}(\bv_t-\bv_s)}
\frac{(\eta !)^{2}
	\vert e^{i2\pi(v_{t,j}-h_{s,j})}-1\vert}
{2\pi \Pi^{\eta }_{j_1=-\eta }|v_{t,j}-h_{s,j}+j_1|}\bigg)\\
&~~~\cdot
\bigg(\mathop{\prod}^{d}_{j=1,j\notin I_{0}(\bell)}
\frac{(\eta !)^{2}\vert e^{i2\pi(v_{t,j}-h_{s,j}-\ell_j G_j)}-1\vert}
{2\pi\Pi^{\eta }_{j_1=-\eta }
	\vert v_{t,j}-h_{s,j}-\ell_j G_j+j_1\vert}\bigg)\Bigg\}\\
&=0.
\end{align*}
Therefore, $u_{st}= \fraku_{st}$ where $1\leq s,t\leq \zeta$.
Combining with the equation \eqref{eq:calusttau}, it follows that
\begin{align*}
	u_{st}=\begin{cases}
		1,~s=t,\\
		0,~s\neq t,
	\end{cases}
	~~\mbox{for}~~1\leq s\leq D,~1\leq t\leq \zeta.
\end{align*}
This means that $M_{11}=I_{\zeta}$ and $M_{21}=\bm{0}$.

Next, we show that inequalities \eqref{ineq:con-Mss} are true when $\bL$ and $\bG$ satisfy conditions \eqref{con:pro-NL}.
		
(i) We prove that the inequality $|v_{s,j}-h_{t,j}+j_1|>0$ holds when $j\notin
I_{0}(\bv_s-\bh_t)$ and $\vert j_1\vert \leq\eta$.
For $j\notin I_{0}(\bv_s-\bv_t)$, due to \Cref{ass:DiopCond} and $\vert j_1\vert\leq \eta$, we have
\begin{align}
|v_{s,j}-h_{t,j}+j_1|
\geq \min_{j\notin I_{0}(\bv_s-\bv_t)}
(\vert v_{s,j}-h_{t,j} \vert-\vert j_1 \vert)
>\frac{L_j C_a}{(2N)^{2+\tau}}-\frac{1}{2}-\eta>0.
\label{bound:A0vshs}
\end{align}
For $j\in I_{0}(\bv_s-\bv_t)$, then
$\vert v_{s,j}-h_{t,j}+j_1\vert =\vert v_{t,j}-h_{t,j}+j_1\vert$.
From the Diophantine inequality
$\vert v_{t,j}-h_{t,j}\vert< 1/2$,
we have $|v_{t,j}-h_{t,j}+j_1|>0$ when $v_{t,j}\neq h_{t,j}$.
Therefore, $|v_{s,j}-h_{t,j}+j_1|>0$ is true
for $j\notin I_{0}(\bv_s-\bh_t)$.
		
(ii) We prove that
$|v_{t,j}-h_{s,j}-\ell_j G_j+j_1|>0$ is true when $j\notin I_{0}(\bell)$.
For $j\notin I_{0}(\bv_s-\bv_t)$, since $\vert j_1 \vert\leq \eta$
and $j\notin I_{0}(\bell)$, then
\begin{align*}
|v_{s,j}-h_{t,j}-\ell_j G_j+j_1|
\geq |\ell_j|G_j-|v_{s,j}-h_{t,j}|-|j_1|.
\end{align*}
Since
\begin{align*}
\vert v_{s,j}-h_{t,j}\vert 
\leq \vert v_{s,j}-v_{t,j}\vert+ \vert v_{t,j}-h_{t,j}\vert
<2L_j\Vert \bP\Vert_1 N+1/2,
\end{align*}
we can obtain
\begin{align*}
|v_{s,j}-h_{t,j}-\ell_j G_j+j_1|
\geq |\ell_j|G_j-2L_j\Vert \bP\Vert_1 N-1/2-\eta.
\end{align*}
Moreover, we have
\begin{align*}
	\vert v_{s,j}-h_{t,j}-\ell_j G_j \vert>0,
\end{align*}
that is $j\notin I_{0}(\bv_s-\bh_t-\bell \circ\bG)$.
		
For $j\in I_{0}(\bv_s-\bv_t)$ and $j\notin I_{0}(\bell)$,
we obtain
\begin{align*}
	|v_{s,j}-h_{s,j}-\ell_j G_j+j_1|
	\geq G_j-\|\bv_{s}-\bh_{s}\|_{\ellinf}-\eta
	> G_j-\frac{1}{2}-\eta>0,
\end{align*}
and this also means that $j\notin I_{0}(\bv_s-\bh_s-\bell \circ \bG)$.

Therefore, $|v_{t,j}-h_{s,j}-\ell_j G_j+j_1|>0$ holds for $j\notin I_{0}(\bell)$.
\end{proof}

\begin{remark}
Similarly to \Cref{pro:M11M21},
it is easy to prove that $M_p$ is the identity matrix when conditions \eqref{con:pro-NL} are ture.
\end{remark}

Applying \Cref{pro:M11M21}, $M$ becomes
\begin{align*}
M=\begin{pmatrix}
I_{\zeta} & M_{12}\\
0 & U
\end{pmatrix}.
\end{align*} 

\textbf{Analyze the bound of $\Vert M^{-1}\Vert_1$}

\Cref{M-nonsingular} will show that $U$ is nonsingular,
then
\begin{align*}
M^{-1}=\begin{pmatrix}
I_{\zeta} & -M_{12}U^{-1}\\
0 & U^{-1}
\end{pmatrix}.
\end{align*}
Moreover, we can obtain
\begin{align*}
\|M^{-1}\|_1\leq \max \{1,~(1+\|M_{12}\|_1)\|U^{-1}\|_1\}.
\end{align*}


The upper bound of $\Vert M^{-1}\Vert_1$ can be obtained by estimating bounds of $\|U^{-1}\|_1$ and $\|M_{12}\|_1$.

\textbf{Subproblem 1: The bound of $\|U^{-1}\|_1$}

\label{sub:error-M-n}
Before giving the bound of $\|U^{-1}\|_1$,
we introduce some necessary lemmas and symbols.

\begin{lemma}
(Chapter 5.8 in \cite{H.J.1986})
Assume that $E=E_1 + E_2$.
If $E_{1}$ is invertible with $\|E_{1}^{-1}\|\cdot \|E_{2}\|<1$,
then $E$ is invertible and
\begin{align}
\|E^{-1}\| \leq \frac{\|E_{1}^{-1}\|}{1-\|E_{1}^{-1}\|\cdot \|E_{2}\|}.
\end{align}
\label{lemma:invertible-matrix-bound-n}
\end{lemma}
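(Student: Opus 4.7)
The plan is to reduce the statement to the standard Neumann series argument for perturbations of the identity. First I would factor $E = E_1(I + E_1^{-1}E_2)$, which is valid since $E_1$ is invertible; in this form $E$ is invertible precisely when $I + E_1^{-1}E_2$ is, and the inverse factors as $E^{-1} = (I + E_1^{-1}E_2)^{-1}E_1^{-1}$.

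Next I would establish that $I + E_1^{-1}E_2$ is invertible with a controlled inverse. Submultiplicativity of the matrix norm gives $\|E_1^{-1}E_2\| \leq \|E_1^{-1}\|\cdot\|E_2\| < 1$, so the Neumann series $S = \sum_{k=0}^{\infty}(-E_1^{-1}E_2)^{k}$ converges absolutely in norm. A telescoping check shows $S(I+E_1^{-1}E_2) = (I+E_1^{-1}E_2)S = I$, so $(I+E_1^{-1}E_2)^{-1} = S$, and the geometric estimate gives $\|S\| \leq \sum_{k=0}^{\infty}\|E_1^{-1}E_2\|^{k} = 1/(1-\|E_1^{-1}E_2\|)$.

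Finally, assembling the factorization and applying submultiplicativity once more yields $\|E^{-1}\| \leq \|S\|\cdot\|E_1^{-1}\| \leq \|E_1^{-1}\|/(1-\|E_1^{-1}E_2\|)$; replacing $\|E_1^{-1}E_2\|$ in the denominator by the hypothesis-friendly product $\|E_1^{-1}\|\cdot\|E_2\|$ only weakens the estimate (since $1-\|E_1^{-1}\|\cdot\|E_2\| \leq 1-\|E_1^{-1}E_2\|$) and produces the stated bound. There is no genuine obstacle here since this is a textbook perturbation lemma; the only care needed is that the norm used throughout is the same submultiplicative matrix norm appearing in the hypothesis (in the later application of the paper this will be the induced $1$-norm $\|\cdot\|_1$), so that the passage from $\|E_1^{-1}E_2\|$ to $\|E_1^{-1}\|\cdot\|E_2\|$ via submultiplicativity is legitimate.
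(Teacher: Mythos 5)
Your proof is correct: the factorization $E=E_1(I+E_1^{-1}E_2)$, the Neumann series bound $\|(I+E_1^{-1}E_2)^{-1}\|\leq 1/(1-\|E_1^{-1}E_2\|)$, and the final weakening via submultiplicativity give exactly the stated estimate. The paper does not prove this lemma but cites it from Horn and Johnson (Chapter 5.8), and your argument is precisely the standard perturbation proof that the citation stands in for, so there is nothing further to reconcile.
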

	
	Set $E_1=\calU,~E_2=U-\calU$ in \Cref{lemma:invertible-matrix-bound-n}.
	\Cref{M-nonsingular} provides the sufficient condition such that $\calU$ is nonsingular and $\|\calU^{-1}\|_1\cdot \|U-\calU\|_1<1$.
	The proof of the upper bound of $\|U^{-1}\|_1$ is splitted into two parts: upper bounds of $\|U-\calU\|_1$ and $\|\calU^{-1}\|_1$ (see \Cref{lemma:bound3-n} and \Cref{lemma:bound1-n}, respectively).

For any $\bv_s,~\bv_t\in Y^d_{D}\setminus Y^d_{\zeta}$, denote
\begin{align}
r_s = \# I_{in}(\bv_s),~~
\alpha_{st}= \# I_{0}(\bv_s-\bv_t), ~s\neq t,
\label{def:rs}
\end{align}
and
\begin{align}
d_m=\min\limits_{\zeta+1\leq s\leq D} \{r_s\},~~
d_M=\max\limits_{\zeta+1\leq s,t\leq D} \{\alpha_{st} \},
\label{def:dm}
\end{align}
with $0\leq d_m\leq d-1$ and $0< d_M \leq d-1$.
Denote $L_{min}=\min\limits_{1\leq j\leq d} L_j$, $L_{max}=\max\limits_{1\leq j\leq d}L_j$ and $G_{min}=\min\limits_{1\leq j\leq d}G_j$.
In the following analysis, we assume that $\bL$ and $\bG$ satisfy \Cref{assum:L-N}.
\begin{assumption}
\label{assum:L-N}
For given positive numbers $\epsilon,~\epsilon_r~(0\leq r\leq d-1 )$,
assume that $\bL$ and $\bG$ satisfy
\begin{align*}
L_{min}>&\frac{(2N)^{2+\tau}}{C_a}\cdot
\left\{ \Big(\eta+\frac{1}{2} \Big) +
\max\Big\{1, \Big[\frac{(\eta !)^2}{\pi}\Big]^{\frac{1}{2\eta+1}},
\Big [\frac{\pi^{d_M}}{\epsilon}\Big]^
{\frac{1}{(2\eta+1)(d-d_M)}}
\Big\}  \right\}  ,	
\\
G_{min}>&\max_{0\leq r\leq d-1}
\left\{2L_{max}\|\bP\|_1 N+\Big(\eta+\frac{1}{2}\Big)
+\bigg[\frac{1}{\eta}\Big(\frac{C_d^r\cdot
\sum^{d-r}_{\beta=0}(2\eta)^{\beta}C^{\beta}_{d-r}}
{\epsilon_r}\Big)^{\frac{1}{d-r}}
\bigg]^{\frac{1}{2\eta+1}}
\right\},
\end{align*}
where $\eta\geq 1$, $C_d^r=\dfrac{d!}{r!(d-r)!}$
and $d_M$ is defined by \eqref{def:dm}.
\end{assumption}

\begin{lemma}
\label{M-nonsingular}
Under \Cref{ass:DiopCond},
for given positive numbers $\epsilon$ and $\epsilon_r~(0\leq r\leq d-1 )$ such that
\begin{align}
(D-\zeta)\sum_{r=0}^{d-1}
\frac{(\eta !)^{-2r}}{\pi^{-r}}
\epsilon_r
+(D-\zeta-1)\epsilon
<\frac{3^d\pi^{d}}{5^d} \left(\frac{1}{2}+\eta \right)^{-2\eta d},
\label{lem:con-all}
\end{align}
and $\bL,\bG$ satisfy \Cref{assum:L-N}.
Then matrices $U$ and $\calU$ defined in \eqref{dey:ucalu}
are nonsingular. Moreover, the inequality $\|\calU^{-1}\|_1\cdot \|U-\calU\|_1<1$ holds.
\end{lemma}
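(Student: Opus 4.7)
The plan is to reduce the statement to the combination of three forthcoming lemmas: \Cref{lemma:bound1-n} (bound on $\|\calU^{-1}\|_1$), \Cref{lemma:bound3-n} (bound on $\|U-\calU\|_1$), and the perturbation result \Cref{lemma:invertible-matrix-bound-n}. The high-level skeleton is: use the NWFT to show that $\calU$ is well-conditioned, use the DFT-vs-NWFT aliasing identity in \Cref{lemma:relation-DFT-NWFT-n} to show that $U$ is a small perturbation of $\calU$, and conclude via Neumann-series reasoning that $U$ is invertible with $\|\calU^{-1}\|_1 \cdot \|U-\calU\|_1<1$.

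First I would invoke \Cref{lemma:bound1-n} on the NWFT matrix $\calU$. Its diagonal entries $\fraku_{ss}$ are built from the product formula \eqref{eq:NWFT-complex-exponential-n} evaluated at $(v_{s,\ell}-h_{s,\ell})$, and under \Cref{ass:DiopCond} together with the lower bound on $L_{min}$ forced by \Cref{assum:L-N}, each such factor can be kept uniformly close to $1$; in particular one should be able to show $|\fraku_{ss}|\gtrsim (3\pi/5)^{d}(1/2+\eta)^{-2\eta d}$, which is exactly the reciprocal of the constant appearing on the right of \eqref{lem:con-all}. The off-diagonal entries are governed by a similar product, but now with factors of order $\epsilon$ (by the second branch of the $L_{min}$ hypothesis), and summing $(D-\zeta-1)$ such terms along a row will give a Gershgorin-type estimate $\|\calU^{-1}\|_1 \le [\,(3/5)^d\pi^d(1/2+\eta)^{-2\eta d} - (D-\zeta-1)\epsilon\,]^{-1}$. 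This yields both the nonsingularity of $\calU$ and the explicit upper bound needed in the sequel.

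Next I would invoke \Cref{lemma:bound3-n} for $\|U-\calU\|_1$. By \Cref{lemma:relation-DFT-NWFT-n}, each entry $u_{st}-\fraku_{st}$ is the aliasing tail $\sum_{\bell\in\bbZ_*^d}\phi^{\eta}_{e^{i2\pi\bv_t\cdot\bx/\bL},\bL}((\bh_s+\bell\circ\bG)/\bL)$. Splitting the sum over $\bell$ according to the partition $\bbZ_*^d=\bigcup_{r=0}^{d-1}(J_r\setminus\{\bo\})$ and applying the product formula \eqref{eq:NWFT-complex-exponential-n}, the factors coming from indices $j\notin I_0(\bell)$ are controlled by the $G_{min}$ lower bound in \Cref{assum:L-N} (producing the $\epsilon_r$ constants), while the remaining factors coming from $j\in I_0(\bell)$ reproduce the $(\eta!)^{-2r}/\pi^{-r}$ prefactor. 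Summing over $s$ and over $t\neq s$ introduces the $(D-\zeta)$ factor and yields $\|U-\calU\|_1 \le (D-\zeta)\sum_{r=0}^{d-1}(\eta!)^{-2r}\pi^{r}\epsilon_r$.

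Finally, combining the two estimates and using the hypothesis \eqref{lem:con-all} one sees that
\begin{equation*}
\|\calU^{-1}\|_1 \cdot \|U-\calU\|_1 \le \frac{(D-\zeta)\sum_{r=0}^{d-1}(\eta!)^{-2r}\pi^{r}\epsilon_r}{(3/5)^d\pi^d(1/2+\eta)^{-2\eta d}-(D-\zeta-1)\epsilon}<1,
\end{equation*}
which is precisely the form required to apply \Cref{lemma:invertible-matrix-bound-n} with $E_1=\calU$ and $E_2=U-\calU$; that lemma then delivers the nonsingularity of $U=\calU+(U-\calU)$ and closes the proof. The principal obstacle is the careful bookkeeping in step two: one must track how the product over $j\notin I_0(\bell)$ combines with the product over $j\in I_0(\bell)\setminus I_0(\bv_t-\bh_s)$ so that the row-sum bound lines up with the $r$-indexed structure of the hypothesis, and one must check that the $G_{min}$ condition of \Cref{assum:L-N} is strong enough to make each aliased factor smaller than the $(C^r_d\sum_\beta (2\eta)^\beta C^\beta_{d-r}/\epsilon_r)^{-1/(d-r)}$ prescribed threshold. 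Everything else is packaging.
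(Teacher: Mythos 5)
Your proposal is correct and takes essentially the same route as the paper: bound $\Vert\calU^{-1}\Vert_1$ by splitting $\calU$ into its diagonal and off-diagonal parts, bound $\Vert U-\calU\Vert_1$ by the aliasing estimate from \Cref{lemma:relation-DFT-NWFT-n}, check that \Cref{assum:L-N} pushes the relevant quantities below the thresholds $\epsilon$ and $\epsilon_r$, and then combine with hypothesis \eqref{lem:con-all} and \Cref{lemma:invertible-matrix-bound-n} applied to $E_1=\calU$, $E_2=U-\calU$, exactly as in the paper's verification that $x_1(x_2+x_3)<1$. The only deviation is cosmetic bookkeeping: your intermediate bounds carry $\pi^d$ where the paper's $x_1,x_2,x_3$ carry $(\eta!)^{2d}$ (and conversely), but these factors cancel in the product $\Vert\calU^{-1}\Vert_1\cdot\Vert U-\calU\Vert_1$, so the final chain agrees with the paper's (which elides the same factor in its displayed estimate).
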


The proof of \Cref{M-nonsingular} will be presented in the end at this subsection.


For convenience, given a vector $\bell=(\ell_j)_{j=1}^d\in\bbZ_*^d$, denote
\begin{align*}
&A^0(v_{s,j},h_{t,j})
= \frac{(\eta !)^{2}\vert e^{i2\pi(v_{s,j}-h_{t,j})}-1\vert}
{2\pi\Pi^{\eta }_{j_1=-\eta }\vert(v_{s,j}-h_{t,j})+j_1\vert},
~~~~~~~~~~~~~~~j \in I_{0}(\bell) \cap I^c_{0}(\bv_s-\bh_t),\\
&A^1(v_{s,j},h_{t,j},\ell_j)
=\frac{(\eta !)^{2}\vert e^{i2\pi(v_{s,j}-h_{t,j}-\ell_j G_j)}-1\vert}
{2\pi\Pi^{\eta }_{j_1=-\eta }\vert(v_{s,j}-h_{t,j}-\ell_j G_j)+j_1\vert},
~~j \in I^c_{0}(\bell) \cap I^c_{0} (\bv_s-\bh_t-\bell\circ \bG).
\end{align*}
Obviously,
$A^1(v_{s,j},h_{t,j},\ell_j)=A^0(v_{s,j},h_{t,j})$ for $j\in I_{0}(\bell)$.
Next, we prove that $A^0(v_{s,j},h_{t,j})$ and $A^1(v_{s,j},h_{t,j},\ell_j)$
are well-defined and bounded under \Cref{ass:DiopCond} and \Cref{assum:L-N}.
	
\begin{proposition}
\label{pro:boundA0A1}
Under \Cref{ass:DiopCond} and \Cref{assum:L-N}. For a given vector
$\bell=(\ell_j)_{j=1}^d\in\bbZ_*^d$,
the following conclusions hold

(1) $A^0(v_{s,j},h_{t,j})$ and
$A^1(v_{s,j},h_{t,j},\ell_j)$ are well-defined. Moreover, for $j\notin I_{0}(\bell)$,
we have $j\notin I_{0}(\bv_s-\bh_t-\bell \circ \bG)$;

(2) If $I_{in}(\bv_s)=\emptyset$.
Then, $I_{0}(\bv_s-\bh_s)=\emptyset$,
\begin{align}
	A^0(v_{s,j},h_{t,j})<1, ~~1\leq j\leq d,
	\label{bound:Ira_empty_A0}
\end{align}
and
\begin{align}
A^1(v_{s,j},h_{t,j},\ell_j)
\leq \frac{(\eta!)^2}
{\pi(\vert \ell_j\vert G_j-\|\bv_{s}-\bh_{t}\|_{\ellinf}-\eta)^{2\eta+1}},
~~j\notin I_{0}(\bell).
\label{bound:Ira_empty_A12}
\end{align}
In particular, when $s\neq t$ and $j\notin I_{0}(\bv_s-\bv_t)$, it follows that
\begin{align}
A^0(v_{s,j},h_{t,j})<
\frac{(\eta !)^{2}}
{\pi(\frac{L_{min}C_a}{(2N)^{2+\tau}}-\frac{1}{2}-\eta)^{(2\eta+1)}}\leq 1.
\label{bound:A0_special}
\end{align}
		
(3) If $I_{in}(\bv_s)\neq \emptyset$, that is $\# I_{in}(\bv_s)=r_s>0$.
Then, $I_{0}(\bv_s-\bh_s)\neq \emptyset$.

(3.1) When $s=t$, we have
\begin{align*}
	A^0(v_{s,j},h_{s,j})<1,~j\notin I_{0}(\bv_s-\bh_s).
\end{align*}
		
When $s\neq t$, if
$I_{0}(\bv_s-\bh_s)\cap I^c_0(\bv_s-\bv_t)\neq \emptyset$,
we have
\begin{align}
A^0(v_{s,j},h_{t,j})
=0,~~ j\in I_{0}(\bv_s-\bh_s)\cap I^c_0(\bv_s-\bv_t).
\label{eq:sneqtA0}
\end{align}
Otherwise,
$I_{0}(\bv_s-\bh_s)\cap I^c_0(\bv_s-\bv_t)= \emptyset$,
then $I_{0}(\bv_s-\bh_s)\subset I_{0}(\bv_s-\bv_t)$ and
\begin{align}
\begin{cases}
A^0(v_{s,j},h_{t,j})
<1,~~ j\notin I_{0}(\bv_s-\bv_t),\\
A^0(v_{s,j},h_{t,j})=A^0(v_{s,j},h_{s,j})
<1,~~ j\in I_{0}(\bv_s-\bv_t)\setminus I_{0}(\bv_s-\bh_s).
\label{bound:Ira_nonempty_A0}
\end{cases}
\end{align}

(3.2) If $I_{0}(\bv_s-\bh_s)\cap I^c_0(\bell)\neq \emptyset$,
we have
\begin{align}
A^1(v_{s,j},h_{t,j},\ell_j)=0,
~j\in I_{0}(\bv_s-\bh_s)\cap I^c_0(\bell).
\label{bound:Ira_empty_A1}
\end{align}
Otherwise, $I_{0}(\bv_s-\bh_s)\cap I^c_0(\bell)= \emptyset$,
we have $I_{0}(\bv_s-\bh_s)\subset I_{0}(\bell)$ and
\begin{align}
A^1(v_{s,j},h_{t,j},\ell_j)
\leq \frac{(\eta!)^2}
{\pi(\vert \ell_j\vert G_j -\|\bv_{s}-\bh_{t}\|_{\ellinf}-\eta)^{2\eta+1}},
~~j\notin I_{0}(\bell).
\label{bound:Ira_nonempty_A1}
\end{align}
\end{proposition}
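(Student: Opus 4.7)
The plan is to split each claim along the size of $\delta := v_{s,j} - h_{t,j}$ (for $A^0$) or of the shifted argument $\delta - \ell_j G_j$ (for $A^1$), and to treat the ``close'' regime $|\delta| < 1/2$, the ``far'' regime $|\delta| \gtrsim L_{min} C_a/(2N)^{2+\tau}$, and the ``integer'' regime $\delta \in \bbZ\setminus\{0\}$ with three distinct tools. For part (1) I would show that every factor $|\delta + j_1|$ in the denominator with $|j_1| \leq \eta$ is strictly positive: when $v_{s,j}$ is irrational, $\delta$ is non-integer so $\delta + j_1 \neq 0$; when $v_{s,j}$ is an integer but $s \neq t$, the Diophantine lower bound on $v_{s,j} - v_{t,j}$ together with \Cref{assum:L-N} pushes $|\delta|$ past $\eta$; the $j_1 = 0$ factor is nonzero by the domain hypothesis. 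The implication $j \notin I_0(\bell) \Rightarrow j \notin I_0(\bv_s - \bh_t - \bell\circ\bG)$ is then immediate from the triangle inequality together with $|\ell_j|G_j \gg \Vert \bv_s - \bh_t \Vert_{\ellinf}$, which \Cref{assum:L-N} secures.

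The crucial step for the close regime is the Euler sine product $\sin(\pi x) = \pi x \prod_{k=1}^{\infty}(1 - x^2/k^2)$. Factoring $\prod_{j_1=-\eta}^{\eta}|\delta + j_1| = |\delta|\prod_{k=1}^{\eta}(k^2 - \delta^2) = (\eta!)^2 |\delta| \prod_{k=1}^{\eta}(1 - \delta^2/k^2)$ and using $|e^{i2\pi\delta} - 1| = 2|\sin(\pi\delta)|$ produces the exact identity
\begin{align*}
A^0(v_{s,j}, h_{t,j}) = \prod_{k=\eta+1}^{\infty}\bigl(1 - \delta^2/k^2\bigr),
\end{align*}
so $A^0 < 1$ whenever $|\delta| < 1$. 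This covers \eqref{bound:Ira_empty_A0} in the case $s=t$ (and $s\neq t$ with $j\in I_0(\bv_s - \bv_t)$, since then $\delta = v_{t,j} - h_{t,j}$ satisfies $|\delta|<1/2$), the $s=t$ sub-claim in (3.1), and the second line of \eqref{bound:Ira_nonempty_A0}. In the integer sub-cases, namely $j \in I_0(\bv_s - \bh_s) \cap I_0^c(\bv_s - \bv_t)$ for \eqref{eq:sneqtA0} and $j \in I_0(\bv_s - \bh_s) \cap I_0^c(\bell)$ for \eqref{bound:Ira_empty_A1}, the integrality of $v_{s,j}$, $h_{t,j}$ and $\ell_j G_j$ makes $\delta$ (resp. $\delta - \ell_j G_j$) a nonzero integer, so the numerator $|e^{i2\pi(\cdot)} - 1|$ vanishes outright.

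For the far regime I would use $\blam_s - \blam_t = \bP(\bk_s - \bk_t)$ with $\Vert \bk_s - \bk_t \Vert_{\ellinf} < 2N$; \Cref{ass:DiopCond} then gives $|v_{s,j} - v_{t,j}| > L_j C_a/(2N)^{2+\tau}$ and therefore $|\delta| > L_j C_a/(2N)^{2+\tau} - 1/2$. Combined with $|\delta + j_1| \geq |\delta| - \eta$ and $|e^{i2\pi\delta} - 1| \leq 2$, this yields the explicit right-hand side of \eqref{bound:A0_special}, which the choice of $L_{min}$ in \Cref{assum:L-N} forces to be at most $1$. The $A^1$ bounds \eqref{bound:Ira_empty_A12} and \eqref{bound:Ira_nonempty_A1} follow the same template with $\ell_j G_j$ as the large quantity: the triangle inequality gives $|\delta - \ell_j G_j + j_1| \geq |\ell_j| G_j - \Vert \bv_s - \bh_t \Vert_{\ellinf} - \eta$, which is positive by \Cref{assum:L-N}, and together with $|e^{i2\pi(\delta - \ell_j G_j)} - 1| \leq 2$ produces the stated quotient.

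I expect the main obstacle is not any single estimate but the case bookkeeping in part (3), where the index sets $I_{in}(\bv_s)$, $I_0(\bv_s - \bh_s)$, $I_0(\bv_s - \bv_t)$ and $I_0(\bell)$ interact. For each intersection one must correctly classify $\delta$ (or $\delta - \ell_j G_j$) as either a small non-integer, a large non-integer, or a nonzero integer, and then invoke the corresponding mechanism above; once this classification is in place each subcase reduces to a one-line estimate.
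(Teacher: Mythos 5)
Your proposal is correct and follows essentially the same route as the paper: well-definedness via the lower bounds furnished by \Cref{assum:L-N} (the inequalities already established in the proof of \Cref{pro:M11M21}), the Weierstrass/Euler sine product for the near-integer regime to get $A^0<1$, the Diophantine lower bound plus $|e^{i\theta}-1|\le 2$ for the far regime and \eqref{bound:A0_special}, the $|\ell_j|G_j$ domination for the $A^1$ bounds, and vanishing numerators in the integer subcases. Your exact identity $A^0=\prod_{k=\eta+1}^{\infty}(1-\delta^2/k^2)$ is a slightly cleaner packaging of the paper's Weierstrass-factorization step, but it is the same mechanism, so no further comparison is needed.
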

	
	\begin{proof}
		(1) From \Cref{assum:L-N}, the inequality \eqref{con:pro-NL} holds.
		Then, according to the proof of \Cref{pro:M11M21},
		the conclusion is easy to prove.
		
		
(2) $I_{in}(\bv_s)=\emptyset~(\zeta+1\leq s\leq D)$
implies $I_{0}(\bv_s-\bh_t)=\emptyset$.
When $s=t$,
\begin{align*}
A^0(v_{s,j}, h_{s,j})=
\frac{(\eta !)^{2}
	\vert e^{i2\pi(v_{s,j}-h_{s,j})}-1\vert}
{2\pi\Pi^{\eta }_{j_1=-\eta }
	\vert (v_{s,j}-h_{s,j})+j_1\vert}
&=\frac{(\eta !)^{2}\cdot
	2 \sin[\pi (v_{s,j}-h_{s,j})]}
{2\pi \vert v_{s,j}-h_{s,j} \vert \cdot
	\Pi^{\eta }_{j_1=1 }
	\vert (v_{s,j}-h_{s,j})^2-j_1^2\vert}\\
&<\frac{\sin [\pi (v_{s,j}-h_{s,j})]}
{\pi \vert v_{s,j}-h_{s,j} \vert}\cdot
\frac{1}
{\Pi^{\eta }_{j_1=1}
	[1-\frac{(v_{s,j}-h_{s,j})^2}{j_1^2}]}.
\end{align*}
From the Weierstrass factorization formula
\begin{align*}
	\sin(\pi z)=\pi z\mathop{\prod}_{\ell=1}^{\infty}
	\Big(1-\frac{z^2}{\ell^2}\Big),
	~~z\in \bbC,
\end{align*}
the function
$$F_1(x)=\mathop{\prod}^{\eta}_{j_1=1}
\Big(1-\frac{x^2}{j_1^2}\Big)$$
monotonically decreases with respect to $\eta$ when $0<x < 1/2$.
Therefore,
\begin{align*}
A^0(v_{s,j}, h_{s,j})<1.
\end{align*}
When $s\neq t$, we also show that inequality \eqref{bound:A0_special} holds.
For $j\notin I_{0}(\bv_s-\bv_t)$, applying the inequality \eqref{bound:A0vshs},
we can obtain
\begin{align*}
A^0(v_{s,j}, h_{t,j})=\frac{(\eta !)^{2}
	\vert e^{i2\pi(v_{s,j}-h_{t,j})}-1\vert}
{2\pi\Pi^{\eta }_{j_1=-\eta }
	\vert v_{s,j}-h_{t,j}+j_1\vert}
&\leq \frac{(\eta !)^{2}}
{\pi\Pi^{\eta }_{j_1=-\eta }
	\vert v_{s,j}-h_{t,j}+j_1 \vert}\\
&<\frac{(\eta !)^{2}}{\pi}\cdot
\frac{1}{\big(\frac{L_{min}C_a}{(2N)^{2+\tau}}-\frac{1}{2}-\eta\big)^{2\eta+1}}
\leq 1.
\end{align*}
For $j\in I_{0}(\bv_s-\bv_t)$,
it follows that
\begin{align*}
	A^0(v_{s,j}, h_{t,j})=A^0(v_{s,j}, h_{s,j})<1.
\end{align*}
		
Meanwhile, from the definition of $A^1(v_{s,j},h_{t,j},\ell_j)$
and
\begin{align*}
|v_{s,j}-h_{t,j}-\ell_j G_j+j_1|
\geq |\ell_j|G_{min}-\|\bv_{s}-\bh_{t}\|_{\ellinf}-\eta,
\end{align*}
then inequality \eqref{bound:Ira_empty_A12} holds.
		
(3) $I_{in}(\bv_s)\neq \emptyset~~ (\zeta+1\leq s\leq D)$ implies $I_{0}(\bv_s-\bh_s)\neq \emptyset$.
		
(3.1) For $s=t$, similar to the above analysis in (2), we have
\begin{align*}
A^0(v_{s,j},h_{s,j})<1,~j\notin I_{0}(\bv_s-\bh_s).
\end{align*}
		
For $s\neq t$, we consider two cases. If
$I_{0}(\bv_s-\bh_s)\cap I^c_0(\bv_s-\bv_t)\neq \emptyset$,
there exists $j$ such that $v_{s,j}$ is an integer and $v_{s,j}-h_{t,j}\in \bbZ_*$. This means $A^0(v_{s,j},h_{t,j})=0$. Otherwise, $I_{0}(\bv_s-\bh_s)\cap I^c_0(\bv_s-\bv_t)=\emptyset$ implies $I_{0}(\bv_s-\bh_s)\subset I_{0}(\bv_s-\bv_t)$.
When $j\notin I_{0}(\bv_s-\bv_t)$, we have $v_{s,j}-h_{t,j}\notin \bbZ$ and
$A^0(v_{s,j},h_{t,j})< 1$.
		
(3.2) Similar to the proof of the conclusion (2), \eqref{bound:Ira_empty_A1} and \eqref{bound:Ira_nonempty_A1} can be proved.
\end{proof}

Next, we estimate the bound of $\|U-\calU\|_1$ by the relation between DFT and NWFT. Denote
\begin{align*}
g_0(t_0)=\frac{1}
{(\frac{L_{min}C_a}{(2N)^{2+\tau}}-\frac{1}{2}-\eta)^{t_0}},~~
g_1(t_1,t_2)=\frac{1}
{(t_1 G_{min}-\|\bv_{s}-\bh_{t}\|_{\ellinf}-\eta)^{t_2}}.
\end{align*}
Inequalities \eqref{bound:A0_special} and \eqref{bound:Ira_nonempty_A1} are rewritten as
\begin{align*}
A^0(v_{s,j},h_{t,j})<\frac{(\eta !)^{2}}{\pi}g_0(2\eta+1)\leq 1,~~
A^1(v_{s,j},h_{t,j},\ell_j)\leq \frac{(\eta!)^2}{\pi}g_1(\vert \ell_j\vert,2\eta+1).
\end{align*}
Denote
\begin{align*}
g_2 = \bigg(\frac{1}{\eta (G_{min}-\frac{1}{2}-\eta)^{2\eta+1}}\bigg)^{d-r},~~
g_3 = \bigg(\frac{1}{\eta
(G_{min}-2L_{max}\|\bP\|_1N-\frac{1}{2}-\eta)^{2\eta+1}}\bigg)^{d-r}.
\end{align*}
	
\begin{theorem}
\label{lemma:bound3-n}
Under \Cref{ass:DiopCond}, then
\begin{align}
\nonumber
\|U-\calU\|_1
&< \sum^{d-1}_{r=d_m}
\frac{(\eta !)^{2(d-r)}}{\pi^{d-r}}
C_{d-d_m}^{r-d_m}\cdot
\sum^{d-r}_{\beta=0}(2\eta)^{\beta}C^{\beta}_{d-r}[g_2+(D-\zeta-1)g_3],
\end{align}
where $d_m$ is defined in \eqref{def:dm}.
\end{theorem}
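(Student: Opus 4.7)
The plan is to bound the max-column-sum defining $\|U - \calU\|_1$ by fixing an arbitrary column index $t \in \{\zeta+1, \ldots, D\}$ and controlling $\sum_{s=\zeta+1}^{D} |u_{st} - \fraku_{st}|$ uniformly in $t$. The starting point is the DFT--NWFT identity of \Cref{lemma:relation-DFT-NWFT-n}, which gives
\[
u_{st} - \fraku_{st}
= \sum_{\bell \in \bbZ_*^d}
\phi^\eta_{e^{i2\pi \bv_t \cdot \bx / \bL},\, \bL}
\!\left( \frac{\bh_s + \bell \circ \bG}{\bL} \right).
\]
Expanding each NWFT term by \eqref{eq:NWFT-complex-exponential-n} and partitioning $\bbZ_*^d = \bigcup_{r=0}^{d-1} J_r$ by the number of zero coordinates of $\bell$, each surviving term becomes a product of factors $A^0(v_{t,j}, h_{s,j})$ for $j \in I_0(\bell)$ and $A^1(v_{t,j}, h_{s,j}, \ell_j)$ for $j \notin I_0(\bell)$, in the notation of \Cref{pro:boundA0A1}.

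The crucial observation explains why the sum over $r$ begins at $d_m$ rather than $0$. By \Cref{pro:boundA0A1}(3.2), as soon as $I_{in}(\bv_t) \not\subset I_0(\bell)$ some coordinate $j \in I_{in}(\bv_t) \cap I_0^c(\bell)$ forces $A^1(v_{t,j}, h_{s,j}, \ell_j) = 0$, killing the whole product. Only $\bell$ with $I_{in}(\bv_t) \subset I_0(\bell)$ contribute, which forces $r \geq r_t \geq d_m$. For each such $r$, the number of admissible placements of the $r - r_t$ extra zero positions among the $d - r_t$ free coordinates is $\binom{d-r_t}{r-r_t} \leq C_{d-d_m}^{r-d_m}$, which is exactly the combinatorial prefactor in the claim.

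Among the surviving terms I would apply the uniform bounds of \Cref{pro:boundA0A1}: $A^0 \leq 1$ on the coordinates $j \in I_0(\bell) \setminus I_{in}(\bv_t)$ that do not automatically vanish, and
\[
A^1(v_{t,j}, h_{s,j}, \ell_j) \leq \frac{(\eta!)^2}{\pi \bigl(|\ell_j| G_{min} - \|\bv_s - \bh_t\|_{\ellinf} - \eta\bigr)^{2\eta+1}}
\]
on the $d - r$ nontrivial coordinates. Summing the $d - r$ independent tail series $\sum_{\ell_j \in \bbZ_*} (|\ell_j| G_{min} - \cdots)^{-(2\eta+1)}$ by integral comparison produces a factor proportional to $\bigl(\eta (G_{min} - \|\bv_s - \bh_t\|_{\ellinf} - \eta)^{2\eta+1}\bigr)^{-(d-r)}$, while the binomial weight $\sum_{\beta=0}^{d-r}(2\eta)^\beta C_{d-r}^\beta = (2\eta+1)^{d-r}$ arises from enumerating the $2\eta+1$ shifts $j_1 \in \{-\eta, \ldots, \eta\}$ appearing in the denominator of $A^1$ across the $d - r$ nontrivial coordinates.

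To finish, I would split the column sum on $s$ into the diagonal contribution $s = t$, where $\|\bv_t - \bh_t\|_{\ellinf} < 1/2$ gives the factor $g_2$, and the $D - \zeta - 1$ off-diagonal contributions $s \neq t$, where the cruder bound $\|\bv_s - \bh_t\|_{\ellinf} < 2 L_{max}\|\bP\|_1 N + 1/2$ gives the factor $g_3$; taking the max over $t$ then yields the stated bound. The main difficulty I anticipate is the careful bookkeeping required to match the combinatorial weight $\sum_\beta (2\eta)^\beta C_{d-r}^\beta$ exactly, together with verifying that \Cref{assum:L-N} is strong enough to keep all denominators strictly positive and the infinite sums uniformly convergent for every $\bell \in J_r$ satisfying the forced zero-position constraint.
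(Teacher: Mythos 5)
Your proposal follows essentially the same route as the paper's proof: expand $u_{st}-\fraku_{st}$ via the DFT--NWFT relation, partition $\bbZ_*^d$ into the sets $J_r$, use the vanishing of $A^1$ on $I_{in}(\bv_t)\cap I^c_{0}(\bell)$ to restrict to $\bell$ with $I_{in}(\bv_t)\subset I_{0}(\bell)$ (hence $r\geq r_t\geq d_m$ and the factor $C_{d-d_m}^{r-d_m}$), bound the surviving factors by $A^0<1$ and the $g_1$ estimate, control the lattice tails by integral comparison, and separate the diagonal term (giving $g_2$) from the $D-\zeta-1$ off-diagonal terms (giving $g_3$) before maximizing over $t$; the paper merely organizes this as two cases, $I_{in}(\bv)=\emptyset$ and $I_{in}(\bv)\neq\emptyset$, which your unified treatment subsumes. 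One small correction to the bookkeeping you flagged: the weight $\sum_{\beta}(2\eta)^{\beta}C^{\beta}_{d-r}$ does not come from enumerating the $2\eta+1$ shifts $j_1$, but from splitting each sum over nonzero $\ell_j$ into the coordinates with $|\ell_j|=1$ (the $\beta$ chosen ones) and those with $|\ell_j|\geq 2$ (bounded by the integral, each contributing a $1/(2\eta)$ factor), combined with the $2^{d-r}$ sign count.
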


\begin{proof}
Let's prove this theorem for $I_{in}(\bv)= \emptyset$ and $I_{in}(\bv) \neq \emptyset$, respectively.

(1) When $I_{in}(\bv)= \emptyset$, from \Cref{assum:L-N}, we have
\begin{align*}
\frac{L_{min}C_a}{(2N)^{2+\tau}}-\frac{1}{2}-\eta
\geq \Big(\frac{(\eta !)^2}{\pi}\Big)^{\frac{1}{2\eta+1}},~~
G_{min}-2L_{max}\|\bP\|_1N - 1/2>\eta.
\end{align*}
For $s,t=\zeta+1,\zeta+2,\cdots,D$,
from the definition of $J_r$ in \eqref{def:J_r} and \Cref{lemma:relation-DFT-NWFT-n}, we have
\begin{align}
\vert u_{st}- \fraku_{st}\vert
&=\vert F_{e^{i2\pi\bv_t\cdot\bx/\bL},\bL,\bG}^{\eta }(\bh_s)
-\phi_{e^{i2\pi\bv_t\cdot\bx/\bL},\bL}^{\eta}
(\bh_s/\bL)\vert\\
&~~\leq \sum_{\bell\in\bbZ_*^d}
\Big\vert\phi^{\eta }_{e^{i2\pi\bv_t\cdot\bx/\bL},\bL}
\Big(\frac{\bh_s+\bell\circ \bG}{\bL}\Big)\Big\vert\notag\\
&= \sum_{r=0}^{d-1}
\sum_{\bell\in J_{r}}
\Bigg\{
\bigg(\mathop{\prod}_{j\in I_{0}(\bell)}
\frac{(\eta !)^{2}
\vert e^{i2\pi(v_{t,j}-h_{s,j})}-1\vert}
{2\pi \Pi^{\eta }_{j_1=-\eta }|v_{t,j}-h_{s,j}+j_1|}\bigg)\\
&~~\cdot
\bigg(\mathop{\prod}^{d}_{j=1,j\notin I_{0}(\bell)}
\frac{(\eta !)^{2}\vert e^{i2\pi(v_{t,j}-h_{s,j}-\ell_j G_j)}-1\vert}
{2\pi\Pi^{\eta }_{j_1=-\eta }
\vert v_{t,j}-h_{s,j}-\ell_j G_j+j_1\vert}\bigg)\Bigg\}\notag \\
&=\sum_{r=0}^{d-1}
\sum_{\bell\in J_{r}}
\Bigg\{
\mathop{\prod}_{j\in I_{0}(\bell)}
A^0(v_{t,j}, h_{s,j})
\cdot
\mathop{\prod}^{d}_{j=1,j\notin I_{0}(\bell)}
A^1(v_{t,j}, h_{s,j},\ell_j)\Bigg\}.
\label{eq:bound-m-1}
\end{align}
In the following, we will analyze the bound of \eqref{eq:bound-m-1} for $s\neq t$ and $s=t$, respectively.

When $s\neq t$, according to
\begin{align*}
\|\bv_{s}-\bh_{t}\|_{\ellinf}
\leq \|\bv_{s}-\bv_{t}\|_{\ellinf}+
\|\bv_{t}-\bh_{t}\|_{\ellinf}
&< 2L_{max}\|\bP\|_1N + 1/2,
\end{align*}
and from the conclusion (2) of \Cref{pro:boundA0A1}, we have
\begin{align*}
&\sum_{\bell\in J_{r}}
\Bigg\{
\mathop{\prod}_{j\in I_{0}(\bell)}
A^0(v_{t,j}, h_{s,j})
\cdot
\mathop{\prod}^{d}_{j=1,j\notin I_{0}(\bell)}
A^1(v_{t,j}, h_{s,j},\ell_j)\Bigg\}\\
&< \sum_{\bell\in J_r}
\mathop{\prod}^{d}_{j=1,j\notin I_{0}(\bell)}
\frac{(\eta!)^2}{\pi}g_1(\vert \ell_j\vert,2\eta+1)\\
&=\Big(\frac{2(\eta !)^{2}}{\pi} \Big)^{d-r} C^r_d\cdot
\bigg\{
\sum_{\substack{\bell\in\bbZ_*^d,
\bell>0,\\
\ell_{1}=\cdots=\ell_{r}=0}}
\mathop{\prod}^{d}_{j=r+1}g_1( \ell_j,2\eta+1)
\bigg\}\\
&=\Big(\frac{2(\eta !)^{2}}{\pi} \Big)^{d-r}C^r_d\cdot
\bigg\{\sum^{d-r}_{\beta=0}\Big[ C^{\beta}_{d-r}g_1(1,(2\eta+1)\beta)
\sum_{\substack{\bell\in\bbZ_*^d\\ \ell_{r+1}=\cdots=\ell_{r+\beta}=1
\\ \ell_j\geq 2,j=r+\beta+1,\cdots,d}}
\mathop{\prod}^{d}_{j=r+\beta+1} g_1(\ell_j,2\eta+1)\Big]\bigg\}\\
&\leq \Big(\frac{2(\eta !)^{2}}{\pi} \Big)^{d-r}C^r_d\cdot
\Bigg\{\sum^{d-r}_{\beta=0}\Big[ C^{\beta}_{d-r}
g_1(1,(2\eta+1)\beta)
\mathop{\prod}^{d}_{j=r+\beta+1}
\int^{\infty}_1 g_1(y_j,2\eta+1)\,dy_j\Big] \Bigg\}\\
&=\Big(\frac{2(\eta !)^{2}}{\pi} \Big)^{d-r}C^r_d\cdot
\Bigg\{\sum^{d-r}_{\beta=0}\Big[ C^{\beta}_{d-r}
g_1(1,(2\eta+1)\beta)\cdot
\Big(\frac{1}{2\eta
G_{min}(G_{min}-\|\bv_{t}-\bh_{s}\|_{\ellinf}-\eta)^{2\eta}}\Big)^{d-r-\beta}
\Big]\Bigg\}\\
&\leq \Big(\frac{2(\eta !)^{2}}{\pi} \Big)^{d-r}C^r_d\cdot
\Bigg\{\sum^{d-r}_{\beta=0}\Big[ C^{\beta}_{d-r}
g_1(1,(2\eta+1)\beta)\cdot
\Big(\frac{1}{2\eta
(G_{min}-\|\bv_{t}-\bh_{s}\|_{\ellinf}-\eta)^{2\eta+1}}\Big)^{d-r-\beta}\Big]\Bigg\}\\
&< \frac{(\eta !)^{2(d-r)}}{\pi^{d-r}}C^r_d\cdot
\sum^{d-r}_{\beta=0}(2\eta)^{\beta}C^{\beta}_{d-r}
\Bigg[\frac{1}{\eta
(G_{min}-2L_{max}\|\bP\|_1N - \frac{1}{2}-\eta)^{2\eta+1}}\Bigg]^{d-r}.
\end{align*}

Therefore,
\begin{align*}
	\vert u_{st}- \fraku_{st}\vert
	< \sum^{d-1}_{r=0}
	\frac{(\eta !)^{2(d-r)}}{\pi^{d-r}}
	C_d^r\cdot
	\sum^{d-r}_{\beta=0}(2\eta)^{\beta}C^{\beta}_{d-r}g_3.
\end{align*}

Similarly, when $s=t$,
\begin{align}
	\vert u_{ss}- \fraku_{ss}\vert
	<\sum^{d-1}_{r=0}
	\frac{(\eta !)^{2(d-r)}}{\pi^{d-r}}
	C_d^r\cdot
	\sum^{d-r}_{\beta=0}(2\eta)^{\beta}C^{\beta}_{d-r}g_2.
	\label{ineq:tt}
\end{align}
It follows that
\begin{align*}
	\|U-\calU\|_1
	&=\max_{\zeta+1\leq t\leq D}\sum^{D}_{s=\zeta+1}
	\vert u_{st}-\fraku_{st}\vert\\
	&< \sum^{d-1}_{r=0}
	\frac{(\eta !)^{2(d-r)}}{\pi^{d-r}}
	C_d^r\cdot\sum^{d-r}_{\beta=0}(2\eta)^{\beta}C^{\beta}_{d-r}
	[g_2+(D-\zeta-1)g_3].
\end{align*}

(2) When $I_{in}(\bv)=I_{0}(\bv_s-\bh_s) \neq \emptyset$, we can obtain a more tighter bound of
$\|U-\calU\|_1$. We consider the diagonal case of $s=t$
and the non-diagonal one of $s\neq t$, separately.

(i) Let's consider the case of $s=t$.
For $r_s \leq r\leq d-1$, we give an index set $\{j^*_1,~ j^*_2,\cdots, j^*_r\}$
such that
$$ I_{0}(\bv_s-\bh_s) \subset \{j^*_1,~ j^*_2,\cdots, j^*_r\}
\subset \{1,2,\cdots,d\},$$
and denote
\begin{align*}
J^*_r=\{\bell\in\bbZ^d: \ell_{j^*_1}=\ell_{j^*_2}=\cdots=\ell_{j^*_r}=0\}.
\end{align*}
From \eqref{eq:NWFT-complex-exponential-n},
we have
\begin{align*}	
&\bigg\vert\frac{1}{L_j}\int^{L_j}_0
\frac{\eta !}{(2\eta -1)!!}
\left(1-\cos\frac{2\pi x_{j}}{L_j}\right)^{\eta}
e^{i2\pi(v_{s,j}-h_{s,j})x_j/L_j}\,dx_j\bigg\vert\\
&=\begin{cases}
1,&j\in I_{0}(\bv_s-\bh_s),\\
A^0(v_{s,j}, h_{s,j}),&j\notin I_{0}(\bv_s-\bh_s).
\end{cases}
\end{align*}
According to the conclusion (3) of \Cref{pro:boundA0A1},
we only need consider the case
$I_{0}(\bv_s-\bh_s)\cap I^c_0(\bell)= \emptyset$
for $\bell\in\bbZ_*^d$
since
\begin{align*}
	A^1(v_{s,j},h_{s,j},\ell_j)=0,
	~j\in I_{0}(\bv_s-\bh_s)\cap I^c_0(\bell).
\end{align*}
Therefore, $I_{0}(\bv_s-\bh_s)\subseteq I_{0}(\bell)$ and
$ \#I_{0}(\bv_s-\bh_s)\leq \#I_{0}(\bell)$.
From inequalities \eqref{bound:Ira_nonempty_A0}
and \eqref{bound:Ira_nonempty_A1}, we can obtain
\begin{align*}
	\vert u_{ss}- \fraku_{ss}\vert
	&\leq \sum_{r=r_s}^{d-1}
	\sum_{\bell\in J_{r}}
	\Bigg\{
	\mathop{\prod}_{j\in I_{0}(\bell)\setminus I_{0}(\bv_s-\bh_s) }
	A^0(v_{s,j},h_{s,j})
	\cdot
	\mathop{\prod}^{d}_{j=1,j\notin I_{0}(\bell)}
	A^1(v_{s,j},h_{s,j},\ell_j)\Bigg\}\\
	&\leq \sum_{r=r_s}^{d-1}
	\sum_{\bell\in J_{r}}
	\Bigg\{
	\mathop{\prod}^{d}_{j=1,j\notin I_{0}(\bell)}
	A^1(v_{s,j},h_{s,j},\ell_j)\Bigg\}\\
	&\leq \sum_{r=r_s}^{d-1} 2^{d-r}
	C^{r-r_s}_{d-r_s}\cdot
	\sum_{\substack{\bell\in J^*_{r}\\\bell>0}}
	\Bigg\{
	\mathop{\prod}^{d}_{\substack{j=1\\ j\notin \{j^*_1,~ j^*_2,\cdots, j^*_r\}}}
	\frac{(\eta !)^{2}}
	{\pi}g_1(\ell_j,2\eta+1)\Bigg\}\\
	&< \sum_{r=r_s}^{d-1}
	\frac{(\eta !)^{2(d-r)}}{\pi^{d-r}}
	C^{r-r_s}_{d-r_s}\cdot
	\sum^{d-r}_{\beta=0}(2\eta)^{\beta}C^{\beta}_{d-r}g_2.
\end{align*}
Obviously, the upper bound mentioned above is tighter than the bound given in \eqref{ineq:tt} since
\begin{align*}
	\sum_{r=r_s}^{d-1}
	\frac{(\eta !)^{2(d-r)}}{\pi^{d-r}}
	C^{r-r_s}_{d-r_s}\cdot
	\sum^{d-r}_{\beta=0}(2\eta)^{\beta}C^{\beta}_{d-r}g_2&\leq \sum_{r=d_m}^{d-1}
	\frac{(\eta !)^{2(d-r)}}{\pi^{d-r}}
	C^{r-d_m}_{d-d_m}\cdot
	\sum^{d-r}_{\beta=0}(2\eta)^{\beta}C^{\beta}_{d-r}g_2\\
	&\leq \sum_{r=0}^{d-1}
	\frac{(\eta !)^{2(d-r)}}{\pi^{d-r}}
	C_d^r\cdot
	\sum^{d-r}_{\beta=0}(2\eta)^{\beta}C^{\beta}_{d-r}g_2.
\end{align*}

(ii) When $s\neq t$ and from \eqref{bound:Ira_nonempty_A1},
the upper bound of $\vert u_{ts}- \fraku_{ts}\vert$ is
\begin{align*}
	\vert u_{ts}- \fraku_{ts}\vert
	< \sum_{r=r_s}^{d-1}
	\frac{(\eta !)^{2(d-r)}}{\pi^{d-r}}
	C^{r-r_s}_{d-r_s}\cdot
	\sum^{d-r}_{\beta=0}(2\eta)^{\beta}C^{\beta}_{d-r} g_3.
\end{align*}
The proof is completed.
\end{proof}

The bound $\Vert U-\calU\Vert_1$ is always finite when $G_j\gg L_j$.
	%
	%
\Cref{M-nonsingular} has presented a sufficient condition to guarantee the
invertibility of $\calU$ and $\|\calU^{-1}_D\|_1\cdot \|\calU_O\|_1 <1$. In the following, we derive an upper bound for $\|\calU^{-1}\|_1$ by decomposing $\calU$ into diagonal part $\calU_D$ and non-diagonal part $\calU_O$.
	
\begin{theorem}
Under \Cref{ass:DiopCond}, then $\calU$ is invertible and
\begin{align}
\|\calU^{-1}\|_1\leq \frac{x_1}{1-x_1x_2},
\label{eq:bound-truncation-NWFT-n}
\end{align}
where
\begin{align*}
\begin{cases}
x_1=\frac{5^{d-d_m}}{3^{d-d_{m}}(\eta !)^{2 (d-d_m) }}(\frac{1}{2}+\eta)^{2\eta (d-d_m)},\\
x_2=(D-\zeta-1)\frac{(\eta !)^{2d}}{\pi^{(d-d_M)}}g_1((2\eta+1)(d-d_M)),
\end{cases}
\end{align*}
$d_m$ and $d_M$ are defined in \eqref{def:dm}.
\label{lemma:bound1-n}
\end{theorem}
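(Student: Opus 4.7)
The plan is to decompose $\calU = \calU_D + \calU_O$ into its diagonal and off-diagonal parts and apply the matrix perturbation bound in \Cref{lemma:invertible-matrix-bound-n} with $E_1 = \calU_D$ and $E_2 = \calU_O$. This reduces the claim to three ingredients: an upper bound $\|\calU_D^{-1}\|_1 \leq x_1$, an upper bound $\|\calU_O\|_1 \leq x_2$, and the active-regime condition $x_1 x_2 < 1$, the last of which follows from the hypotheses inherited from \Cref{M-nonsingular} (in particular, \Cref{assum:L-N} forces $g_0(2\eta+1)$ to be small enough that $x_2$, and hence $x_1 x_2$, sits well below $1$).

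First I would derive a lower bound on the diagonal entries $|\fraku_{ss}|$ for $s \geq \zeta + 1$. The explicit NWFT formula \eqref{eq:NWFT-complex-exponential-n} gives $|\fraku_{ss}| = \prod_{j \notin I_{0}(\bv_s - \bh_s)} A^0(v_{s,j}, h_{s,j})$, a product of $d - r_s$ factors. Setting $\theta = v_{s,j} - h_{s,j}$, so that $|\theta| < 1/2$ from the Diophantine approximation, combining $|\sin(\pi\theta)| \geq 2|\theta|$ in the numerator with $\prod_{j_1=-\eta}^{\eta} |\theta + j_1| \leq |\theta|(\eta + 1/2)^{2\eta}$ in the denominator cancels the $|\theta|$ factor and yields
\[
A^0(v_{s,j}, h_{s,j}) \;\geq\; \frac{2(\eta!)^2}{\pi(\eta + 1/2)^{2\eta}} \;\geq\; \frac{3(\eta!)^2}{5(\eta + 1/2)^{2\eta}},
\]
where the second inequality uses $3\pi < 10$. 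Multiplying over at most $d - d_m$ such factors (since $r_s \geq d_m$) and inverting produces $\|\calU_D^{-1}\|_1 = \max_{s \geq \zeta+1} |\fraku_{ss}|^{-1} \leq x_1$.

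Next I would estimate the off-diagonal entries $|\fraku_{st}|$ for $s \neq t$, $s, t \geq \zeta + 1$. Writing $|\fraku_{st}| = \prod_{j \notin I_{0}(\bv_t - \bh_s)} A^0(v_{t,j}, h_{s,j})$, I split the product at the index set $I_{0}(\bv_t - \bv_s)$. For every $j \notin I_{0}(\bv_t - \bv_s)$ the Diophantine-based estimate \eqref{bound:A0_special} from \Cref{pro:boundA0A1} (which uses \Cref{ass:DiopCond} together with \Cref{assum:L-N}) supplies $A^0 \leq \frac{(\eta!)^2}{\pi} g_0(2\eta+1)$; for the remaining $j$ the trivial $A^0 \leq 1 \leq (\eta!)^2$ (loosened to match the stated form of $x_2$) is enough. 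Since the number of ``irrational-difference'' dimensions satisfies $d - \alpha_{st} \geq d - d_M$, this yields
\[
|\fraku_{st}| \;\leq\; \frac{(\eta!)^{2d}}{\pi^{d - d_M}}\, g_0\bigl((2\eta + 1)(d - d_M)\bigr),
\]
and summing the $D - \zeta - 1$ off-diagonal entries in any row of $\calU$ gives $\|\calU_O\|_1 \leq x_2$.

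With both estimates in hand, \Cref{lemma:invertible-matrix-bound-n} applied to $\calU = \calU_D + \calU_O$ delivers $\calU$ invertible and
\[
\|\calU^{-1}\|_1 \;\leq\; \frac{\|\calU_D^{-1}\|_1}{1 - \|\calU_D^{-1}\|_1 \cdot \|\calU_O\|_1} \;\leq\; \frac{x_1}{1 - x_1 x_2},
\]
which is exactly \eqref{eq:bound-truncation-NWFT-n}. The main obstacle is the off-diagonal estimate: one must split dimensions according to whether $v_{t,j} = v_{s,j}$, apply the Diophantine lower bound (\Cref{ass:DiopCond}) only on the dimensions where $v_{t,j} \neq v_{s,j}$, and book-keep the combinatorics so that the uniform count $d - \alpha_{st} \geq d - d_M$ produces a bound independent of the particular pair $(s, t)$. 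Once this is settled, the remainder is algebraic simplification and invocation of the previously established auxiliary results.
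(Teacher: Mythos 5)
Your proposal is correct and follows essentially the same route as the paper: decompose $\calU=\calU_D+\calU_O$, bound $\|\calU_D^{-1}\|_1$ by $x_1$ and $\|\calU_O\|_1$ by $x_2$ via the explicit NWFT product formula and \Cref{pro:boundA0A1}, and conclude with \Cref{lemma:invertible-matrix-bound-n} under the smallness guaranteed by \Cref{M-nonsingular}. The only cosmetic difference is that you obtain the per-dimension constant $5/3$ from Jordan's inequality $|\sin(\pi\theta)|\geq 2|\theta|$, whereas the paper uses the bound $\frac{\sin x}{x}\geq\frac{\pi^2-x^2}{\pi^2+x^2}$ with $\|\bv_t-\bh_t\|_{\ellinf}<1/2$; both yield the same $x_1$.
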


\begin{proof}
\Cref{M-nonsingular} implies that $\calU$ is invertible.
For $\bv_t\in\bbR^d\backslash\bbQ^d$,
\begin{align*}
	\fraku_{tt}=\phi_{e^{i2\pi\bv_t\cdot\bx/\bL},\bL}^{\eta}
	(\bh_t/\bL)\neq 0.
\end{align*}
This means that $\calU_D$ is invertible.
Let's prove the inequality \eqref{eq:bound-truncation-NWFT-n} for $I_{in}(\bv)= \emptyset$ and $I_{in}(\bv) \neq \emptyset$, respectively.	

(1) When $I_{in}(\bv)= \emptyset$, from the definition \eqref{eq:NWFT-complex-exponential-n} of $\phi$, we have
\begin{align*}
\vert \fraku_{tt}^{-1}\vert
=\Big\vert [\phi_{e^{i2\pi\bv_t\cdot\bx/\bL},\bL}^{\eta }
(\bh_t/\bL)]^{-1}\Big\vert
&=\Big\vert \mathop{\prod}^{d}_{j=1}
\frac{i2\pi\Pi^{\eta }_{j_1=-\eta }[(v_{t,j}-h_{t,j})+j_1]}
{(-1)^{\eta }(\eta !)^{2}(e^{i2\pi(v_{t,j}-h_{t,j})}-1)}\Big\vert\\
&=\mathop{\prod}^{d}_{j=1}
\frac{2\pi\Pi^{\eta }_{j_1=-\eta }
	\Big\vert v_{t,j}-h_{t,j}+j_1\Big\vert}
{(\eta !)^{2}\Big\vert 2\sin[\pi(v_{t,j}-h_{t,j})]\Big\vert}\\
&\leq \frac{1}{(\eta !)^{2d}}
(\|\bv_{t}-\bh_{t}\|_{\ellinf}+\eta)^{2\eta d}
\Big(\frac{1+\|\bv_{t}-\bh_{t}\|^{2}_{\ellinf}}
{1-\|\bv_{t}-\bh_{t}\|^{2}_{\ellinf}}\Big)^d.
\end{align*}
The last inequality is true according to the following inequality 
\begin{align*}
\frac{\pi^2-x^2}{\pi^2+x^2}
\leq\frac{\sin x}{x}
\leq \Big(\frac{\pi^2-x^2}{\pi^2+x^2}\Big)^{\pi^2/12},
~x\in (0,\pi).
\end{align*}
Since $\calU_D^{-1}=(\fraku_{tt}^{-1})$
and the function
$f(x)=(1+x)/(1-x)$ is monotonically increasing,
it follows that
\begin{align*}
\|\calU_D^{-1}\|_1
=\max_{\zeta+1\leq t\leq D} |\fraku_{tt}^{-1}|
&\leq \max_{\zeta+1\leq t\leq D}
\frac{1}{(\eta !)^{2d}}
(\|\bv_{t}-\bh_{t}\|_{\ellinf}+\eta)^{2\eta d}
\Big(\frac{1+\|\bv_{t}-\bh_{t}\|^{2}_{\ellinf}}
{1-\|\bv_{t}-\bh_{t}\|^{2}_{\ellinf}}\Big)^d\\
&<\frac{5^d}{3^d(\eta !)^{2d}}
(\frac{1}{2}+\eta)^{2\eta d}.
\end{align*}

For the non-diagonal elements of $\calU$,
from \eqref{bound:Ira_empty_A0} and \eqref{bound:A0_special},
we can obtain
\begin{align*}
\vert \fraku_{st}\vert=\Big\vert \phi_{e^{i2\pi\bv_t\cdot\bx/\bL},\bL}^{\eta } (\bh_s/\bL)\Big\vert
&=\Bigg\vert \mathop{\prod}^{d}_{j=1}
\frac{(-1)^{\eta }(\eta !)^{2}[e^{i2\pi(v_{t,j}-h_{s,j})}-1]}
{i2\pi\Pi^{\eta }_{j_1=-\eta }[(v_{t,j}-h_{s,j})+j_1]}
\Bigg\vert\notag\\
&=\mathop{\prod}^{d}_{\substack{j=1,\\j\notin I_{0}(\bv_s-\bv_t)}}
A^0(v_{t,j},h_{s,j})
\cdot
\mathop{\prod}_{j\in I_{0}(\bv_s-\bv_t) }
A^0(v_{s,j},h_{s,j})\notag\\
&<\frac{(\eta !)^{2(d-\alpha_{st})}}{\pi^{d-\alpha_{st}}}
g_0((2\eta+1)(d-\alpha_{st})),
\end{align*}
where $\alpha_{st}$ is defined in \eqref{def:rs}.

From \Cref{assum:L-N}, we can obtain
\begin{align*}
\frac{L_{min}C_a}{(2N)^{2+\tau}}-\frac{1}{2}-\eta\geq 1.
\end{align*}
Therefore, we have
\begin{align*}
\|\calU_O\|_1
=\max_{\zeta+1\leq t\leq D}\sum^{D}_{s=\zeta+1,s\neq t}
|\fraku_{st}|
&< \max_{\zeta+1\leq t\leq D}\sum^{D}_{s=\zeta+1,s\neq t}
\frac{(\eta !)^{2(d-\alpha_{st})}}{\pi^{d-\alpha_{st}}}
g_0((2\eta+1)(d-\alpha_{st}))\\
&<( D-\zeta-1) \frac{(\eta !)^{2d}}{\pi^{d-d_M}}
g_0((2\eta+1)(d-d_M)),
\end{align*}
with $d_M=\max\limits_{s,t} \{\alpha_{st} \}\leq d-1$.
Applying \Cref{lemma:invertible-matrix-bound-n},
the conclusion of this theorem holds for $I_{in}(\bv)= \emptyset$.

(2) When $I_{in}(\bv)\neq \emptyset$,
we can also provide the upper bound on $\|\calU^{-1}\|_1$.

(i) For $s=t$, it follows that
\begin{align*}
\vert \fraku_{ss}^{-1}\vert
=\Big\vert [\phi_{e^{i2\pi\bv_s\cdot\bx/\bL},\bL}^{\eta }
(\bh_s/\bL)]^{-1}\Big\vert
&=\Bigg\vert \mathop{\prod}^{d}_{\substack{j=1\\ j\neq I_{in}(\bv_s)}}
\frac{i2\pi\Pi^{\eta }_{j_1=-\eta }[(v_{s,j}-h_{s,j})+j_1]}
{(-1)^{\eta }(\eta !)^{2}(e^{i2\pi(v_{s,j}-h_{s,j})}-1)}\Bigg\vert\\
&< \frac{5^{d-r_{s}}}{3^{d-r_{s}}(\eta !)^{2(d-r_{s})}}
(\frac{1}{2}+\eta)^{2\eta (d-r_{s})},
\end{align*}
where $r_{s}$ is defined in \eqref{def:rs}.
Due to $\eta\geq 1$, it easily finds
$$1\leq \frac{5}{3(\eta !)^{2}}
(\frac{1}{2}+\eta)^{2\eta},$$
then
\begin{align*}
\frac{5^{d-r_{s}}}{3^{d-r_{s}}(\eta !)^{2(d-r_{s})}}
(\frac{1}{2}+\eta)^{2\eta (d-r_{s})}\leq
\frac{5^{d}}{3^{d}(\eta !)^{2d}}
(\frac{1}{2}+\eta)^{2\eta d}.
\end{align*}
Therefore, we can obtain
\begin{align*}
\|\calU_D^{-1}\|_1
=\max_{\zeta+1\leq s\leq D} |\fraku_{ss}^{-1}|
<\frac{5^{d-d_m}}{3^{d-d_m}(\eta !)^{2{(d-d_m)}}}
(\frac{1}{2}+\eta)^{2\eta (d-d_m)}
\leq\frac{5^d}{3^d(\eta !)^{2d}}
(\frac{1}{2}+\eta)^{2\eta d},
\end{align*}
with $d_m=\min\limits_{ s} \{r_{s}\}\leq d-1$.

(ii) For $s\neq t$ and from \eqref{eq:sneqtA0},
$A^0(v_{t,j},h_{s,j})=0$
for $j\in I_{0}(\bv_t-\bh_t)\cap I^c_0(\bv_s-\bv_t)$,
which implies $ \fraku_{st}=0$.
Therefore, we only consider
$I_{0}(\bv_t-\bh_t)\subset I_{0}(\bv_s-\bv_t)$.
\begin{align*}
\vert \fraku_{st}\vert
&=\mathop{\prod}^{d}_{\substack{j=1,\\j\notin I_{0}(\bv_s-\bv_t)}}
A^0(v_{t,j},h_{s,j})
\cdot
\mathop{\prod}_{j\in I_{0}(\bv_s-\bv_t)\setminus I_{in}(\bv_s)}
A^0(v_{s,j},h_{s,j}) \notag\\
&< \frac{(\eta !)^{2d}}{\pi^{(d-d_M)}}
g_0((2\eta+1)(d-d_M)),
\end{align*}
with $d_M=\max\limits_{ s,t} \{\alpha_{st} \}\leq d-1$.
Consequently, we have
\begin{align*}
\|\calU_O\|_1
< (D-\zeta-1) \frac{(\eta !)^{2d}}{\pi^{d-d_M}}
g_0((2\eta+1)(d-d_M)).
\end{align*}

From the abve analysis, inequality \eqref{eq:bound-truncation-NWFT-n} holds.
\end{proof}

According to \Cref{lemma:bound3-n} and \Cref{lemma:bound1-n},
we can derive the upper bound of $\|U^{-1}\|_1$.
\begin{theorem}
Under \Cref{ass:DiopCond}, $U$ is nonsingular and
\begin{align*}
\|U^{-1}\|_1 \leq\frac{x_1}
{1-x_{1}(x_{2}+x_{3})},
\end{align*}
where
\begin{align*}
&x_1=
\dfrac{5^{d-d_m}}{3^{d-d_m}(\eta !)^{2{(d-d_m)}}}
\Big(\dfrac{1}{2}+\eta \Big)^{2\eta {(d-d_m)}},\\
&x_2=(D-\zeta-1) \frac{(\eta !)^{2d}}{\pi^{d-d_M}}g_0((2\eta+1)(d-d_M)),\\
&x_3=\sum\limits^{d-1}_{r=d_m}
\dfrac{(\eta !)^{2(d-r)}}{\pi^{(d-r)}}
C_{d-d_m}^{r-d_m}\cdot
\sum\limits^{d-r}_{\beta=0}(2\eta)^{\beta}C^{\beta}_{d-r}
[g_2+(D-\zeta-1)g_3],
\end{align*}
$d_m$ and $d_M$ are defined by \eqref{def:dm}.
\label{thm:bound-M-n}
\end{theorem}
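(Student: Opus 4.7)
The plan is to view $U$ as a small perturbation of the NWFT matrix $\calU$, namely $U = \calU + (U - \calU)$, and then invoke \Cref{lemma:invertible-matrix-bound-n} with $E_1 = \calU$ and $E_2 = U - \calU$. The hypotheses needed are precisely what \Cref{M-nonsingular} delivers: under \Cref{ass:DiopCond} and \Cref{assum:L-N}, the matrix $\calU$ is invertible and $\|\calU^{-1}\|_1 \cdot \|U-\calU\|_1 < 1$. The perturbation lemma then immediately gives both invertibility of $U$ and the estimate
\begin{equation*}
\|U^{-1}\|_1 \;\leq\; \frac{\|\calU^{-1}\|_1}{1 - \|\calU^{-1}\|_1\cdot \|U - \calU\|_1}.
\end{equation*}

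The next step is simply to feed into this display the two quantitative bounds already proved in the preceding subsection. \Cref{lemma:bound1-n} yields $\|\calU^{-1}\|_1 \leq x_1/(1 - x_1 x_2)$, where $x_1, x_2$ are exactly the quantities appearing in the present statement. \Cref{lemma:bound3-n}, after relabelling the outer summation index to match the notation of $x_3$, gives $\|U - \calU\|_1 \leq x_3$. Substituting and clearing the nested fraction produces
\begin{equation*}
\|U^{-1}\|_1 \;\leq\; \frac{x_1/(1-x_1x_2)}{1 - x_1 x_3/(1-x_1x_2)} \;=\; \frac{x_1}{1 - x_1 x_2 - x_1 x_3} \;=\; \frac{x_1}{1 - x_1(x_2 + x_3)},
\end{equation*}
which is exactly the asserted bound.

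The only point requiring a brief check is that the denominator $1 - x_1(x_2 + x_3)$ is strictly positive, equivalently $x_1 x_2 + x_1 x_3 < 1$. This splits naturally into the two ingredients already secured: $x_1 x_2 < 1$ is precisely the condition giving $\calU$ an inverse in \Cref{lemma:bound1-n}, while $x_1 x_3 < 1 - x_1 x_2$ rearranges to $\|\calU^{-1}\|_1 \cdot \|U - \calU\|_1 < 1$, which is guaranteed by hypothesis \eqref{lem:con-all} of \Cref{M-nonsingular}. No new smallness assumption is therefore introduced. The main obstacle, such as it is, is bookkeeping rather than analysis: one must verify that the index sets and constants ($d_m$, $d_M$, $r_s$, $\alpha_{st}$, $\epsilon$, $\epsilon_r$) appearing in the two supporting lemmas are identified consistently with those in the theorem statement. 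Once this alignment is carried out, the theorem follows immediately from the perturbation lemma.
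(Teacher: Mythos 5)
Your proposal is correct and follows essentially the same route as the paper: decompose $U=\calU+(U-\calU)$, apply \Cref{lemma:invertible-matrix-bound-n} with the invertibility and smallness condition supplied by \Cref{M-nonsingular}, then substitute the bounds from \Cref{lemma:bound1-n} and \Cref{lemma:bound3-n} (the paper notes the monotonicity of the resulting expression in both arguments) to arrive at $x_1/(1-x_1(x_2+x_3))$. Your check that the denominator is positive is likewise what the proof of \Cref{M-nonsingular} establishes via the inequality $x_1(x_2+x_3)<1$ under \Cref{assum:L-N} and \eqref{lem:con-all}.
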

	
\begin{proof}
\Cref{M-nonsingular} implies that $U$ is invertible.
According to \Cref{lemma:invertible-matrix-bound-n}, we have
\begin{align*}
\|U^{-1}\|_1
\leq \frac{\|\calU^{-1}\|_1}{1-\|\calU^{-1}\|_1\cdot
\|U-\calU\|_1}:=\bar F(\|\calU^{-1}\|_1,\|U-\calU\|_1).
\end{align*}
$\bar F$ monotonously increases with respect to
$\|\calU^{-1}\|_1$ and $\|U-\calU\|_1$ since $\partial\bar F/\partial (\|\calU^{-1}\|_1)>0$ and $\partial\bar F/\partial (\|U-\calU\|_1)>0$.
Combining with \Cref{lemma:bound3-n} and \Cref{lemma:bound1-n},
the proof can be completed.
\end{proof}


{\bf The proof of \Cref{M-nonsingular}:}	From the above analysis, we know that $x_1(x_2+x_3)<1$ implies $U$ and $\calU$ are invertible.
We will now demonstrate that this inequality is true when \Cref{assum:L-N} is satisfied for $\bL$ and $\bG$.

(1) When $I_{in}(\bv)= \emptyset$, denote $x_3=(D-\zeta-1)C_O+C_D$ where
\begin{align*}
C_O=\sum^{d-1}_{r=0}
\frac{(\eta !)^{2(d-r)}}{\pi^{(d-r)}}
C_d^r\cdot \sum^{d-r}_{\beta=0}(2\eta)^{\beta}C^{\beta}_{d-r}g_3,~~
C_D=\sum^{d-1}_{r=0}
\frac{(\eta !)^{2(d-r)}}{\pi^{(d-r)}}
C_d^r\cdot
\sum^{d-r}_{\beta=0}(2\eta)^{\beta}C^{\beta}_{d-r}g_2,
\end{align*}
and
\begin{align*}
\frac{1}{x_1}=
\frac{3^d(\eta !)^{2d}}{5^d} (\frac{1}{2}+\eta)^{-2\eta d}=C(d,\eta).
\end{align*}
Since $d\geq 1$ and $L_j>0$, then $C_D<C_O$. The inequality $x_1(x_2+x_3)<1$ becomes		
\begin{align}
(D-\zeta-1)C_O+C_D+(D-\zeta-1)\frac{(\eta !)^{2d}}{\pi^{d-d_M}}
g_0((2\eta+1)(d-d_M))
<C(d,\eta).
\label{ineq:L-N}
\end{align}
Hence, we need to prove that
the inequality \eqref{ineq:L-N} holds.
	
Firstly, for a given positive number $\epsilon$,
since $L_{max}$ satisfies
\begin{align*}
\frac{(2N)^{2+\tau}}{C_a}\cdot
\Big\{\Big [\frac{\pi^{d_M}}{\epsilon}\Big]^
{\frac{1}{(2\eta+1)(d-d_M)}}
+\frac{1}{2}+\eta\Big\}
<L_{max},
\end{align*}
we have
\begin{align*}
\Big[\frac{\pi^{d_M}}{\epsilon}\Big]^
{\frac{1}{(2\eta+1)(d-d_M)}}
+\frac{1}{2}+\eta
<\frac{L_{max}C_a}{(2N)^{2+\tau}}.
\end{align*}
It follows that
\begin{align*}
\frac{\pi^{d_M}}
{[\frac{L_{max}C_a}{(2N)^{2+\tau}}-\frac{1}{2}-\eta]^{(2\eta+1)(d-d_M)}}
=\pi^{d_M}g_0((2\eta+1)(d-d_M))
<\epsilon.
\end{align*}
Secondly, for positive numbers
$\epsilon_r$ ($r=0,1,\cdots,d-1$),
since $G_{min}$ satisfies
\begin{align*}
G_{min}>\max_{0\leq r\leq d-1}
\Big\{2L_{max}\|\bP\|_1N+\frac{1}{2}+\eta
+\Big\{\Big[\frac{C_d^r\cdot
\sum^{d-r}_{\beta=0}(2\eta)^{\beta}C^{\beta}_{d-r}}
{\epsilon_r}\Big]^{\frac{1}{d-r}}\cdot
\frac{1}{\eta}
\Big\}^{\frac{1}{2\eta+1}}
\Big\},	
\end{align*}
\textit{i.e.},
\begin{align*}
\Big[\frac{C_d^r\cdot
\sum^{d-r}_{\beta=0}(2\eta)^{\beta}C^{\beta}_{d-r}}
{\epsilon_r}\Big]^{\frac{1}{d-r}}
<\eta (G_{min}-2L_{max}\|\bP\|_1N-\frac{1}{2}-\eta)^{2\eta+1},
\end{align*}
we can obtain
\begin{align*}
C_O&=\sum^{d-1}_{r=0}
\frac{(\eta !)^{2(d-r)}}{\pi^{(d-r)}}
C_d^r\cdot
\sum^{d-r}_{\beta=0}(2\eta)^{\beta}C^{\beta}_{d-r}
\Big[\frac{1}
{\eta (G_{min}-2L_{max}\|\bP\|_1N- \frac{1}{2}-\eta)^{2\eta+1}}\Big]^{d-r}\\
&<\sum_{r=0}^{d-1}
\frac{(\eta !)^{2(d-r)}}{\pi^{(d-r)}}\epsilon_r.
\end{align*}
Moreover, $C_D<C_O$ means that
$C_D<\sum_{r=0}^{d-1}\frac{(\eta !)^{2(d-r)}}{\pi^{(d-r)}}\epsilon_r $.
If positive numbers $\epsilon_r~(r=0,1,\cdots,d-1)$ and
$\epsilon$ satisfy
the inequality \eqref{lem:con-all},
we have
\begin{align*}
&(D-\zeta-1)C_O+C_D+(D-\zeta-1)\frac{(\eta !)^{2d}}{\pi^{d-d_M}}
g_0((2\eta+1)(d-d_M))\\
&<(D-\zeta)\sum_{r=0}^{d-1}
\frac{(\eta !)^{-2r}}{\pi^{-r}}
\epsilon_r+(D-\zeta-1)\epsilon
<C(d,\eta).
\end{align*}
Therefore, the inequality \eqref{ineq:L-N} is true and
$U$ is nonsingular. Moreover, we can obtain
\begin{align*}
(D-\zeta-1)\frac{(\eta !)^{2d}}{\pi^{d-d_M}}
g_0((2\eta+1)(d-d_M))
<C(d,\eta),
\end{align*}
which means $x_1x_2<1$ and $\calU$ is nonsingular.
The condition that guarantees $U$ is nonsingular is sufficient to ensure that $\calU$ is also nonsingular.

(2) When $I_{in}(\bv)\neq \emptyset$, based on the analysis in \Cref{lemma:bound3-n} and \Cref{lemma:bound1-n},
we know that $x_3=(D-\zeta-1)\hat{C}_O+\hat{C}_D$ where
\begin{align*}
&\hat{C}_O=\sum_{r=d_m}^{d-1}
\frac{(\eta !)^{2(d-r)}}{\pi^{d-r}}
C^{r-d_m}_{d-d_m}\cdot
\sum^{d-r}_{\beta=0}(2\eta)^{\beta}C^{\beta}_{d-r}g_3,\\
&\hat{C}_D=\sum_{r=d_m}^{d-1}
\frac{(\eta !)^{2(d-r)}}{\pi^{d-r}}
C^{r-d_m}_{d-d_m}\cdot
\sum^{d-r}_{\beta=0}(2\eta)^{\beta}C^{\beta}_{d-r}g_2.
\end{align*}
Note that $\hat{C}_O \leq C_O$, $\hat{C}_D\leq C_D$
and $\hat{C}(d,\eta)>C(d,\eta)$.
Hence, if the inequality \eqref{ineq:L-N} holds, we have
\begin{align*}
(D-\zeta-1)\hat{C}_O+\hat{C}_D+(D-\zeta-1)\frac{(\eta !)^{2d}}{\pi^{d-d_M}}
g_0((2\eta+1)(d-d_M))
<\hat{C}(d,\eta).
\end{align*}
This means that $U$ and $\calU$ are nonsingular.
Consequently, the proof of \Cref{M-nonsingular} is completed.
	
\textbf{Subproblem 2: The bound of $\|M_{12}\|_1$}
	
\label{sub:error-M12-n}
Based on the analysis in \Cref{sub:error-M-n},
we can directly give the upper bound of $\|M_{12}\|_1$.
Based on the proof of \Cref{lemma:bound3-n} and \Cref{lemma:bound1-n}, it follows that
\begin{align*}
\|M_{12}\|_1<(\zeta+1)(x_2+y_2),
\end{align*}
where
\begin{align}
x_2=\frac{(\eta !)^{2d}}{\pi^{d-d_M}}
g_0((2\eta+1)(d-d_M)),
~y_2=\sum\limits^{d-1}_{r=d_m}
\dfrac{(\eta !)^{2(d-r)}}{\pi^{d-r}}
C_{d-d_m}^{r-d_m}\cdot
\sum\limits^{d-r}_{\beta=0}(2\eta)^{\beta}C^{\beta}_{d-r}g_3,
\label{eq:x2y2}
\end{align}
with $d_m,~d_M$ defined by \eqref{def:dm}.

\subsection{Analysis of Diophantine approximation matrix $ \Delta \bf V$}
\label{subsec:Dio-matrix}

In this subsection, we analyze the approximation rate of Diophantine approximation error $\Vert \DbV\Vert_e$ and discuss the periodic approximation function sequence. 
From the definition of $\Vert \DbV\Vert_e$, we can derive  
\begin{align*}
\Vert \DbV\Vert_e
&=\Vert (\bh_1-\bv_1,\bh_2-\bv_2,\cdots,\bh_{D}-\bv_{D})\Vert_e\\
&= \sum_{\ell=1}^{D} \sum_{j=1}^{d} \vert h_{\ell,j}-v_{\ell,j}\vert
\leq d  \max_{1\leq j \leq d} \sum_{\ell=1}^{D} \vert h_{\ell,j}-v_{\ell,j}\vert,
\end{align*}
which is equivalent to the simultaneous approximation of $\bu_{j}=(v_{\ell,j})^{D}_{\ell=1},~j=1,2,\cdots,d$.
Denote 
\begin{align*}
\hat{R}(\bh)=(h_{\ell,j}-v_{\ell,j})^{D,d}_{\ell=1,j=1},~
R(\bv)=([v_{\ell,j}]-v_{\ell,j})^{D,d}_{\ell=1,j=1}.
\end{align*}
Now we can show that $\Vert \hat{R}(\bh)\Vert_{\ellinf}<1/2$ if and only if $h_{\ell,j}=[v_{\ell,j}]$. When $\hat{R}(\bh)\neq R(\bv)$ and $\Vert R(\bv)\Vert_{\ellinf} <1/2$, we have
\begin{align*}
\Vert\hat{R}(\bh)-R(\bv) \Vert_{\ellinf}
=\Vert(h_{\ell,j}-[v_{\ell,j}])^{D,d}_{\ell=1,j=1} \Vert_{\ellinf}\geq 1.
\end{align*}
This means that $\Vert\hat{R}(\bh) \Vert_{\ellinf}
\geq \Vert\hat{R}(\bh)-R(\bv) \Vert_{\ellinf}-\Vert R(\bv) \Vert_{\ellinf} >1/2$, which is obviously contradictory.

Since we assume that there are only integers and irrational elements in $\bu_{j}~(j=1,2,\cdots, d)$, then $h_{\ell,j}=v_{\ell,j}$ when $v_{\ell,j}$ is an integer. According to \Cref{thm:Dirichletapp} and \Cref{def:bestapprox}, it follows that
\begin{align*}
\Vert \DbV\Vert_e
= \sum_{j=1}^d \sum_{k=1}^{s_j} \vert h_{\ell_k,j}-v_{\ell_k,j}\vert
\leq   \sum_{j=1}^d C_{s_j} L_j^{-1/s_j},~~L_j\in \calT_j (Y^d_{D}),
\end{align*}
where $s_{j}$ represents the number of different irrational Fourier frequency elements in $\bu_{j}~(j=1,2,\cdots, d)$, respectively.
As a consequence, the above expression provides a upper bound of $\|\DbV\|_e$ rather a supremum, and demonstrates that $\|\DbV\|_e$ is inversely proportional to $L_j$. Nevertheless, the uniform decrease of $\|\DbV\|_e$ with a gradually increasing $L_j$ cannot be guaranteed due to the property of irrational number. It is possible that a gradual increase of $L_j$ may increase $\|\DbV\|_e$.

\begin{remark}

(i) When the vector $\bu_{j}$ only contains an irrational number $\lambda_{j_1}$, then  $\{t_1,~t_2,\cdots\}$ is continued fraction expansion of $\lambda_{j_1}$.

(ii)  When the vector $\bu_{j}$ contains more than one irrational number, finding the simultaneous approximation sequence $\{q_{1,j},~q_{2,j},\cdots\}$ is NP-hard problem \cite{Lagarias1985computational,Lagarias1993quality}.
\end{remark}
	

	
\subsection{Summary}
\label{subsec:summary}

We put previous results together and give the bound of the rational approximation error
\begin{equation}
\begin{aligned}
\| f_{p}-\calP_N f\|_{\infty}
&< \underbrace{b_{max}\|M^{-1}\|_1\|M_p-M\|_e
+2\pi b_{max} \|\DbV\|_e }_{\varepsilon_1}\\
&<\underbrace{ 2\pi b_{max} \Big[D \frac{[1+(\zeta+1)(x_2+y_2)] x_1}
{1-x_{1}(x_{2}+x_{3})}
+1\Big]\|\DbV\|_e}_{\varepsilon_2},
\end{aligned}
\label{eq:summary}
\end{equation}
where the definitions of $x_1$, $x_2$, $x_3$ and $y_2$ are given in \Cref{thm:bound-M-n} and \eqref{eq:x2y2}, respectively.



The main result of this work is summarized as
\begin{theorem}
\label{thm:bound of tf-f}
Under \Cref{ass:DiopCond}, and assume that the quasiperiodic function $f\in H_{QP}^\alpha(\bbR^d)$, 
the error in approximating $f$ with $f_{p}\in H^\alpha(\bbT^d)$ is given by
\begin{equation*}
\begin{aligned}
\Vert f_{p}-f\Vert_{\infty}
\lesssim  \max_{1\leq j \leq d} L_j^{-s_j} + N^{\kappa-\alpha}\vert f \vert_\alpha,
\end{aligned}
\end{equation*} 
where $\alpha>\kappa>d/2$, $s_j$ is the number of different irrational elements in $j$-th dimension of $Y_D^d$ and $L_j\in \calT_j(Y^d_{D})$ is the corresponding best simultaneous approximation sequence.

\end{theorem}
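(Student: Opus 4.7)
The plan is to reduce the theorem to a combination of two ingredients already assembled in the paper: the truncation bound for $\mathcal{P}_N f$ coming from the regularity of $f$, and the rational approximation bound summarized in \eqref{eq:summary}. My first step is to invoke the triangle inequality \eqref{eq:error-bound-n}, which splits $\Vert f_p - f\Vert_\infty$ into $\Vert f_p - \calP_N f\Vert_\infty + \Vert \calP_N f - f\Vert_\infty$. The assumption $f\in H^\alpha_{QP}(\bbR^d)$ together with the cited estimate $\Vert \calP_N f - f\Vert_\infty \lesssim N^{\kappa-\alpha}\vert f\vert_\alpha$ (valid for $\alpha>\kappa>d/2$) immediately handles the truncation term, so the remaining work is entirely on the rational approximation side.

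For the rational approximation piece, I would verify that \Cref{ass:DiopCond} together with \Cref{assum:L-N} hold, so that \Cref{M-nonsingular}, \Cref{thm:bound-M-n}, and the estimate of $\Vert M_{12}\Vert_1$ are applicable. Then I would chain the inequalities already derived: the bound \eqref{ineq:tf-f-2} expressing $\Vert f_p - \calP_N f\Vert_\infty$ in terms of $\Vert \Delta\by\Vert$ and $\Vert \DbV\Vert_e$, the linear-system estimate $\Vert \Delta\by\Vert \leq b_{\max}\Vert M^{-1}\Vert_1 \Vert M_p - M\Vert_e$ following from \eqref{eq:matrixForm}, and the bound $\Vert M_p - M\Vert_e < 2\pi D\Vert \DbV\Vert_e$ from \Cref{sub:error-M-p-n}. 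Combining these with \Cref{thm:bound-M-n} reproduces the summary estimate $\Vert f_p - \calP_N f\Vert_\infty \lesssim \Vert \DbV\Vert_e$ where the implicit constant depends on $D$, $\zeta$, $d$, $\eta$, $b_{\max}$, and $\Vert \bP\Vert_1$, but is independent of the approximating periods $\bL$ once they satisfy \Cref{assum:L-N}.

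Finally, I would apply the Diophantine analysis of \Cref{subsec:Dio-matrix}: by \Cref{thm:Dirichletapp} combined with \Cref{def:bestapprox}, choosing $L_j$ from the best simultaneous approximation sequence $\calT_j(Y^d_D)$ yields column-wise bounds that together give $\Vert \DbV\Vert_e \lesssim \max_{1\leq j\leq d} L_j^{-s_j}$, where $s_j$ counts the distinct irrational elements appearing in the $j$-th coordinate of $Y_D^d$. Substituting back produces the asserted estimate.

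The main obstacle I expect is not any single calculation but the careful bookkeeping required to ensure that the prefactor obtained from \Cref{thm:bound-M-n} remains a genuine constant (in $\bL$) and does not absorb the decay provided by $\Vert \DbV\Vert_e$. Concretely, one must verify that the quantities $x_1,x_2,x_3,y_2$ appearing in \eqref{eq:summary}, which depend on $\bL$ and $\bG$ through $g_0,g_1,g_2,g_3$, can be uniformly controlled once \Cref{assum:L-N} is imposed, so that the factor $[1+(\zeta+1)(x_2+y_2)]x_1/(1-x_1(x_2+x_3))$ is bounded and only the $\Vert \DbV\Vert_e$ factor drives the rate. Once this uniformity is secured, the two pieces assemble routinely into the stated bound.
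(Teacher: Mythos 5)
Your proposal is correct and follows essentially the same route as the paper: the triangle inequality \eqref{eq:error-bound-n}, the cited truncation bound $\Vert \calP_N f - f\Vert_\infty \lesssim N^{\kappa-\alpha}\vert f\vert_\alpha$, the chain \eqref{ineq:tf-f-2}--\eqref{eq:matrixForm} with \Cref{thm:bound-M-n} and the $\Vert M_{12}\Vert_1$ estimate assembled into \eqref{eq:summary}, and finally the Diophantine bound on $\Vert\DbV\Vert_e$ from \Cref{subsec:Dio-matrix}, which is exactly how the paper concludes. Your emphasis on checking that $x_1,x_2,x_3,y_2$ stay uniformly bounded under \Cref{assum:L-N} is the right bookkeeping point, and your final step inherits the same identification of the Dirichlet rate with $L_j^{-s_j}$ that the paper itself makes.
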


	\subsection{Discussion}
	\label{subsec:dis}
	
	\subsubsection{On the error bound}
	From the above analysis, we can make some discussion on 
	the approximation error.
	
	(i) When the quasiperiodic function $f(\bx)$ is known, we can use the equation \eqref{eq:procedure2} to directly obtain the approximate error $\|f_{p}-f\|_{\infty}$. 
	
	(ii) When the Fourier exponents of $f(\bx)$ and the periodic approximation function $f_{p}(\bx)$ are given, we can calculate an error bound $\varepsilon_1$ defined in \eqref{eq:summary}. Moreover, by solving \eqref{eq:procedure2}, we can obtain the Fourier coefficient vector $\by$.
	
	(iii) When the quasiperiodic function $f(\bx)$ is unknown, we can use \Cref{thm:bound of tf-f} to obtain an upper bound of $\|f_{p}-f\|_{\infty}$. 
	Moreover, the error bounds of $\|f_{p}-\calP_Nf\|_{\infty}$ have a relationship
	\begin{align*}
		\|f_{p}-\calP_Nf\|_{\infty} < \varepsilon_1<\varepsilon_2.
	\end{align*}

\subsubsection{On the best approximation rate}

Although directly computing the best simultaneous approximation sequence $\mathcal{T}$ can be challenging, we can still discuss its growth rate. 
The sequence $\mathcal{T}$ increases at a rate of at least \cite{Lagarias1982Best}
\begin{align*}
\liminf\limits_{k\rightarrow \infty}(t_k)^{1/k}\geq 1+\frac{1}{2^{s+1}}.
\end{align*}
The sequence $\mathcal{T}$ grows at a rate of at most \cite{Chevallier2001Meilleures}
\begin{align*}
\limsup\limits_{k\rightarrow \infty}\frac{1}{k}\ln t_k\leq C,
\end{align*}
for almost all $\bu=(u_{\ell})^{D}_{\ell=1}\in\bbR^{D}$ and $C$ is a contant. Here, ``almost all" refers to the Lebesgue measure on $\bbR^{D}$.

\section{Some examples}
\label{sec:NR}

This section offers two examples for $d=1$ and $d=3$
to support our theoretical results.
These examples involve only finite trigonometric summations, so there is no truncation error. Denote $\varepsilon_0=\|f_{p}-\calP_Nf\|_{\infty}$
and $e(L_j)=\sum_{\ell=1}^D \vert L_j\lambda_{\ell,j}-[L_j\lambda_{\ell,j}]\vert$, $j=1,2,\cdots,d$.
The $d$-dimensional quasiperiodic function $f(\bx)$ has the following expansion
$
f(\bx)=\sum\limits_{\ell=1}^{D} a_\ell e^{i2\pi (\bL\circ \blam_{\ell})\cdot \bx/\bL}.
$
The corresponding periodic approximation function $f_{p}(\bx)$ is given in \eqref{eq:periodic-trunction-n}
with a fundamental domain $[0,L_1)\times [0,L_2)\times\cdots \times [0,L_d)$.
We will show the rational approximation error $\varepsilon_0$ and two theoretical upper error bounds $\varepsilon_1$, $\varepsilon_2$.
To derive a more accurate upper bound $\varepsilon_2$, $x_1$ in \eqref{eq:summary} is calculated by
\begin{align*}
x_1=\max_{\zeta+1\leq s\leq D}\left\{
(\|\bv_{s}-\bh_{s}\|_{\ellinf}+\eta)^{2\eta (d-r_s)}
\bigg(\frac{1+\|\bv_{s}-\bh_{s}\|^{2}_{\ellinf}}
{1-\|\bv_{s}-\bh_{s}\|^{2}_{\ellinf}}\bigg)^{d-r_s}\right\}.
\end{align*}
	
\Cref{M-nonsingular} gives a sufficient condition that $M$ is invertible. In the following examples, this condition
can be weakened as
\begin{align}
\frac{L_{min}C_a}{(2N)^{2+\tau}}-\frac{1}{2}-\eta
> \max\Big\{1,~\Big[\frac{(\eta !)^2}{\pi}\Big]^{\frac{1}{2\eta+1}}\Big\},~~
G_{min}-2L_{max}\|\bP\|_1N - \frac{1}{2}>\eta.
\label{eq:LN-condition}
\end{align}
Let $\eta=1$, then
$[(\eta !)^2/ \pi]^{\frac{1}{2\eta+1}}=(1/\pi)^{\frac{1}{3}}<1$.
Inequality \eqref{eq:LN-condition} becomes
\begin{align*}
L_{min} > \frac{5}{2}\cdot\frac{(2N)^{2+\tau}}{C_a} ,~~
G_{min}>2L_{max}\|\bP\|_1N+\frac{3}{2}.
\end{align*}

\begin{example}
\label{EG:one}
Consider a one-dimensional quasiperiodic function $f(x)$
with four Fourier exponents
$(\lambda_1,~\lambda_2,~\lambda_3,~\lambda_4)=(1,~\sqrt{2},~2+\sqrt{2},~1+2\sqrt{2}).$
The corresponding Fourier coefficients are
$a_1=0.02-0.2i,~a_2=0.1,~a_3=0.03+0.1i,~a_4=0.02.$
\end{example}
In this example, the projection matrix is 
$\bP=(1~~\sqrt{2}).$
The reciprocal lattice vectors are
\begin{align*}
\begin{pmatrix}
\bk_1&\bk_2&\bk_3&\bk_4
\end{pmatrix}
=\begin{pmatrix}
1 & 0 & 2 & 1\\
0 & 1 & 1 & 2
\end{pmatrix},
\end{align*}
with $N=2$. Let Diophantine parameters $C_a=2$ and $\tau=0.2$. We can verify that $\lambda_1,~\lambda_2,~\lambda_3,~\lambda_4$ satisfy the Diophantine condition and
$\zeta=1,~I_{in}(\lambda_j)=\emptyset~ (j=2,3,4),~d_M=0,~\|\bP\|_1=\sqrt{2}$. Therefore, $L>20$ and $G>4\sqrt{2}L+3/2$.
Here, we choose $G=10L$.

We can obtain the periodic approximation function $f_{p}(x)$ from the equation \eqref{eq:procedure2}. For example, when $L=13860$, Fourier exponents of $f_{p}(x)$ are 
\begin{align*}
h_1 = 13860,~h_2=19601,~h_3=53062,~h_4=47321.
\end{align*}
The corresponding Fourier coefficient vector $\by_p$ is
{\small{
\begin{align*}
\by_p=\begin{pmatrix}
0.0200-0.2000i,0.1000-(8.0139 e-07)i,0.0300+0.1000i,0.0200-(1.6028e-07) i
\end{pmatrix}^{T}.
\end{align*}
}}

\Cref{fig:DV1} illustrates that as $L$ increases, $e(L)$ decreases, but the decrease is not uniform. To compute the error results, we present the first eight terms of the optimal approximation sequence $\mathcal{T}$, which correspond to the first column in \Cref{tab:1DPAM}. 
\begin{figure}[!htbp]
\centering
\includegraphics[width=4.6in]{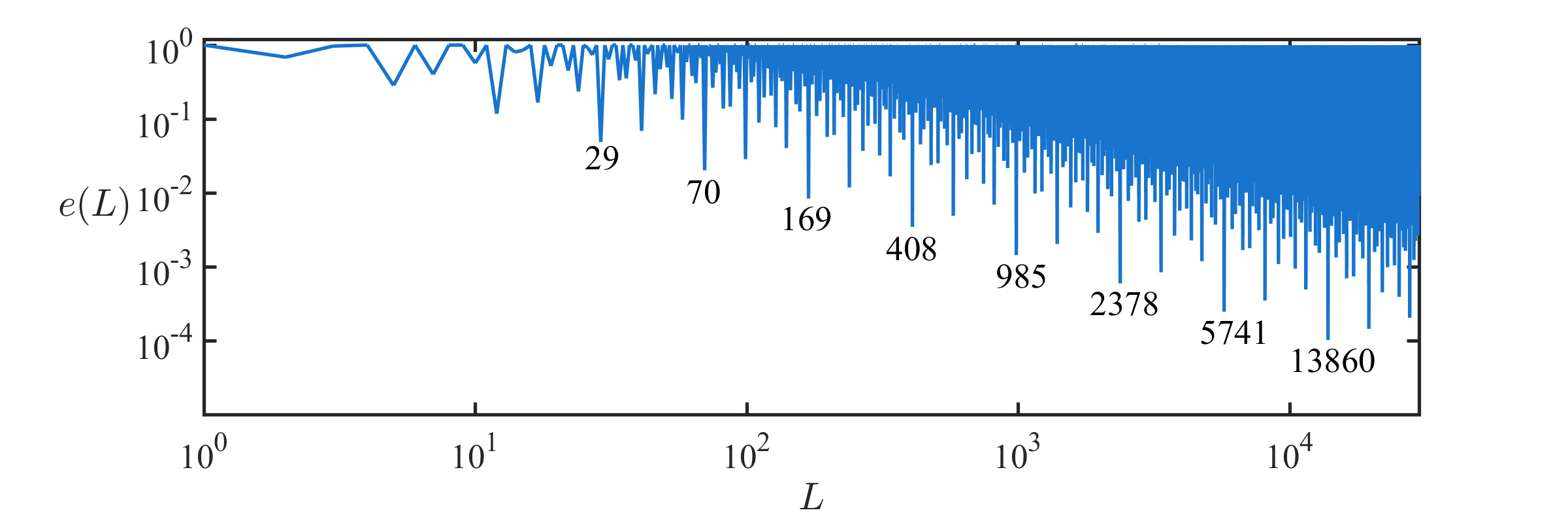}
\caption{The change of Diophantine approximation error $e(L)$ with an increase of $L$. The sequence in the figure corresponds to the best simultaneous approximation sequence.}\label{fig:DV1}
\end{figure}

	
\begin{table}[!hptb]
\centering
\footnotesize{
\caption{Error results of the one-dimensional case given in \Cref{EG:one}.}\label{tab:1DPAM}
\renewcommand\arraystretch{1.8}
\setlength{\tabcolsep}{3.2mm}
{\begin{tabular}{|c|c|c|c|c|}\hline
$L$    &$\|\DbV\|_e$ & $\varepsilon_0$ &$\varepsilon_1$    & $\varepsilon_2$     \\ \hline
29      &4.8773e-02    & 2.7689e-02              &9.2404e-02        &3.2843e-01          \\ \hline
70      &2.0203e-02    & 1.1549e-02               &3.8271e-02       &1.2979e-01          \\ \hline
169     &8.3682e-03    & 4.7935e-03               &1.5852e-02        &5.3200e-02         \\ \hline
408     &3.4662e-03    & 1.9877e-03               &6.5662e-03       &2.1948e-02          \\ \hline
985    &1.4357e-03    & 8.2512e-04             &2.7198e-03       &9.0765e-03          \\ \hline
2378    &5.9471e-04   &3.4217e-04           &1.1266e-03       &3.7571e-03        \\ \hline
5741    &2.4634e-04  &1.4180e-04            &4.6665e-04        &1.5558e-03         \\ \hline
13860    &1.0201e-04 &5.8747e-05            &1.9329e-04       &6.4436e-04          \\ \hline
\end{tabular}}
}
\end{table}

\Cref{tab:1DPAM} presents the rational approximation error $\varepsilon_0$ and the corresponding theoretical upper bounds $\varepsilon_1$ and $\varepsilon_2$. The relationship of $\varepsilon_0<\varepsilon_1<\varepsilon_2$ is consistent with the discussion in \Cref{subsec:dis}.

\Cref{fig:errorcompare} displays the reduction rates of these four errors when $L$ is selected as the best approximation sequence. It is evident that all four errors decrease at the rate of $O(L^{-1})$. Furthermore, we observe that the error $\varepsilon_0$ depends not only on the error $\Vert \DbV \Vert_e $ between Fourier exponents but also on the error $ \Vert \Delta \by \Vert$ between Fourier coefficients.

\begin{figure}[!htbp]
\centering
\includegraphics[width=2.4in]{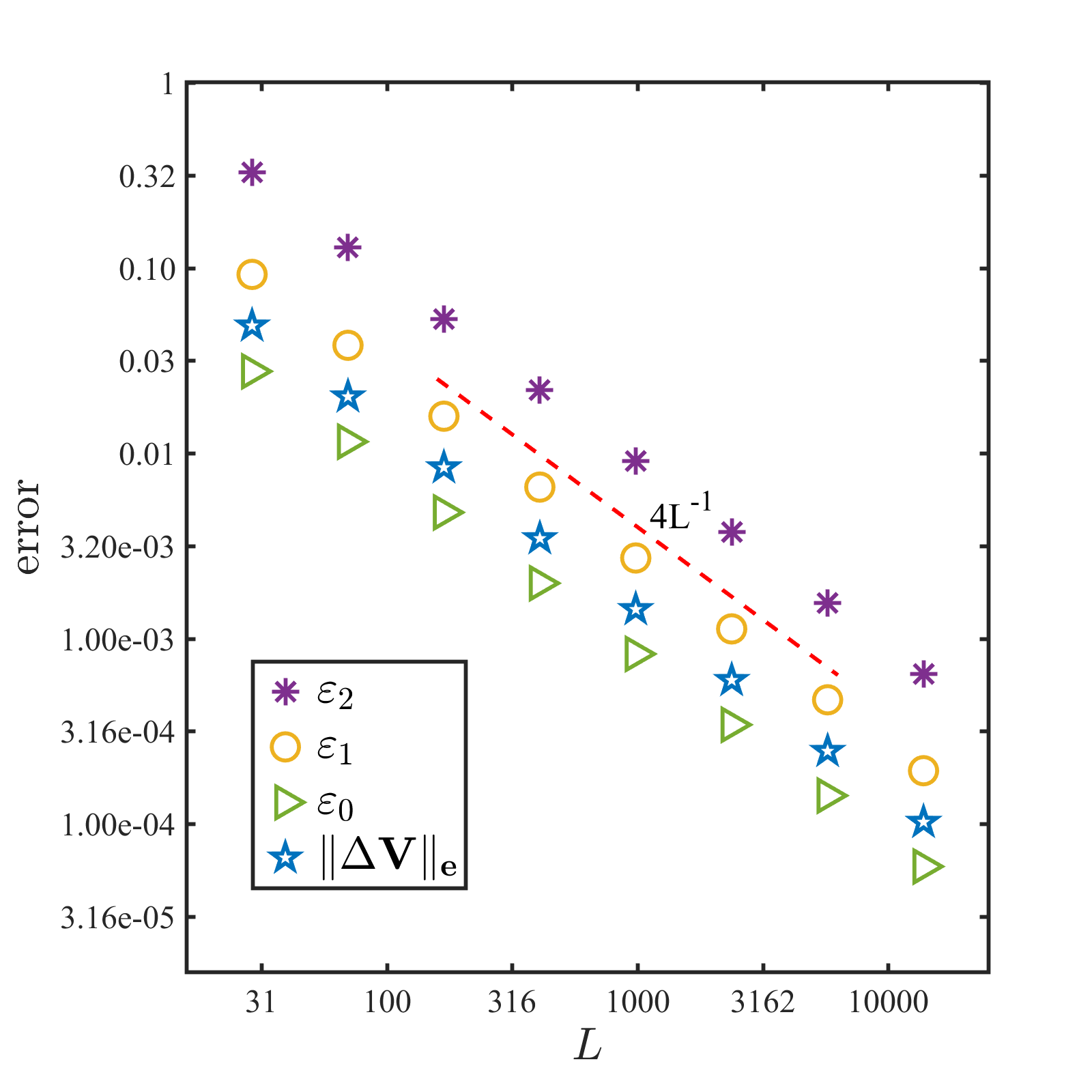}
\caption{In \Cref{EG:one}, the relationship between errors $\Vert \DbV\Vert_e$, $\varepsilon_0$, $\varepsilon_1$, $\varepsilon_2$ and $L$.}\label{fig:errorcompare}
\end{figure}

\begin{example}
\label{EG:three}
We consider two three-dimensional quasiperiodic functions.

Case (i). The Fourier exponents are
\begin{align*}
\bLam_1
=\begin{pmatrix}
1  & \sqrt{3}/2\\
0  & \sqrt{2}/2\\
\sqrt{3}/2 & 0
\end{pmatrix}.
\end{align*}
The corresponding Fourier coefficients are $a_1=0.2+0.1 i,~a_2=0.1 +0.2 i.$
Here, $d_M=0$.

Case (ii). The Fourier exponents are
\begin{align*}
\bLam_2
=\begin{pmatrix}
1 &  0         & \sqrt{5}/4\\
0 & \sqrt{2}/2         & 0\\
0 &\sqrt{3}/2 &\sqrt{3}/2
\end{pmatrix}.
\end{align*}
The corresponding Fourier coefficients are
$a_1=0.2+0.1 i,~a_2=0.1 +0.2 i,~a_3=0.02 -0.02 i.$
Here, $d_M=1$.
\end{example}

For Case (i), the projection matrix is
\begin{align*}
& \bP_1=\begin{pmatrix}
1& 0 & 0  &\sqrt{3}/2\\
0 & \sqrt{2}/2 & 0  &0\\
0 & 0        & \sqrt{3}/2   &0
\end{pmatrix},
\end{align*}
and $N=1$. The reciprocal lattice vectors are $\bk_1 = (1, 0, 1, 0)^T$ and $\bk_2 = (0, 1, 0, 1)^T$.
When Diophantine parameters $C_a=2$ and $\tau=0.1$,
$\zeta=0$, $I_{in}(\blam_j)\neq \emptyset,~(j=1,2)$, $d_m=1$, $\|\bP_1\|_1=1.$
Then $L_{min}>5$ and $G_{min}>2L_{max}+3/2$. Here, we choose $G_{min}=2L_{max}+10$.

It is evident that the second dimension is associated with $\sqrt{2}$, while the first and third dimensions are related to $\sqrt{3}$. Consequently, \Cref{fig:casei-subfig:a}-\Cref{fig:casei-subfig:b} show the relationship between $e(L_j)$ and $L_j$ with $j=1,2$, respectively. Note that $L_1=L_3$.

\begin{figure}[!htbp]
\centering
\subfigure[The relationship between $e(L_1)$ and $L_1$.]{
\label{fig:casei-subfig:a}
\includegraphics[width=4.6in]{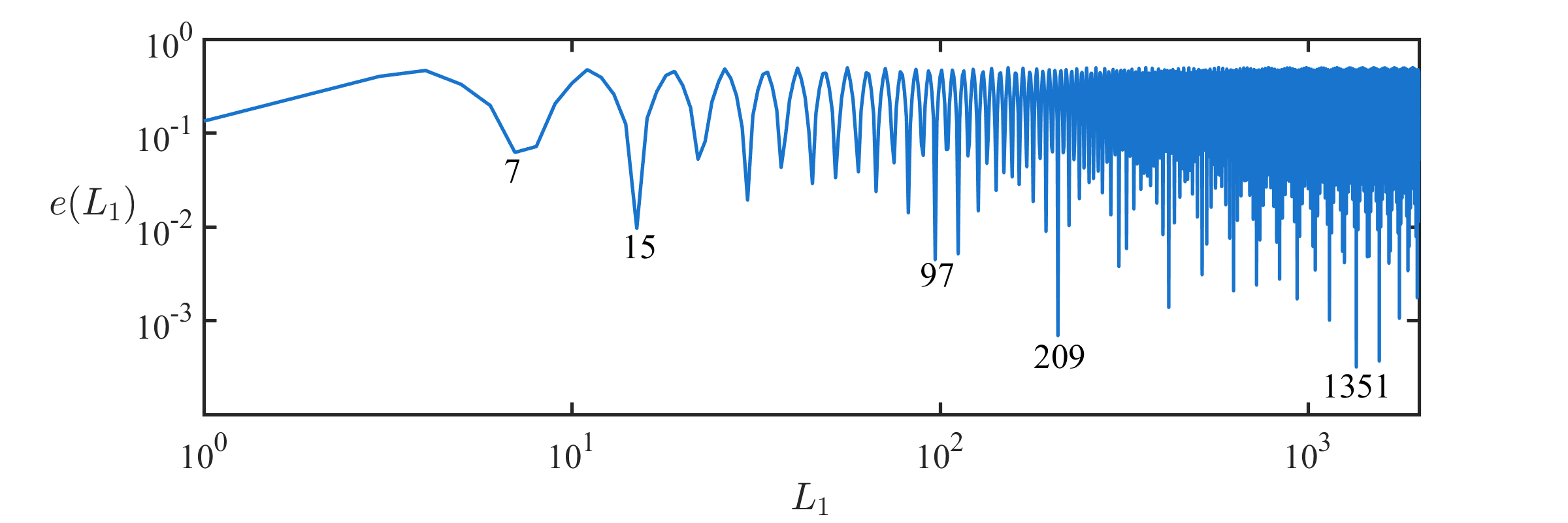}}\\
\subfigure[The relationship between $e(L_2)$ and $L_2$.]{
\label{fig:casei-subfig:b}
\includegraphics[width=4.6in]{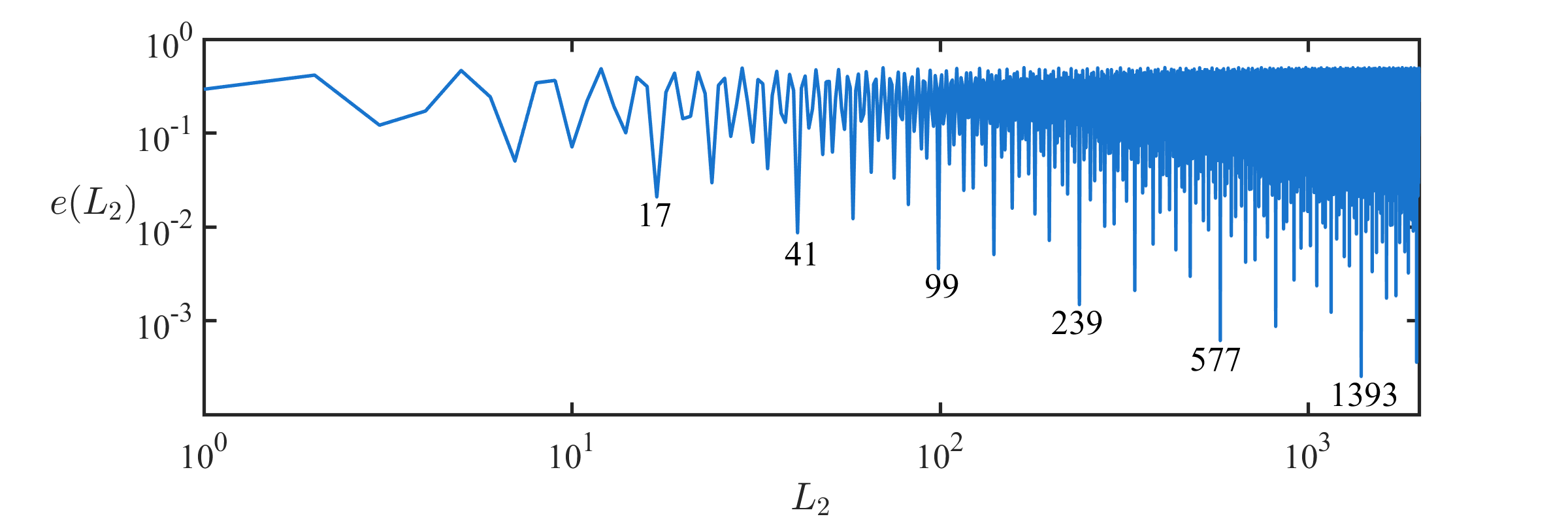}}
\caption{In Case (i) of \Cref{EG:three}, when the projection matrix is $\bP_1$, Diophantine approximation error $e(L_j)$ changes with increasing $L_j$ and $j=1,2$. The sequence in the subfigure corresponds to the best simultaneous approximation sequence in the corresponding dimension.}\label{fig:casei-eL}
\end{figure}

For Case (ii), the projection matrix is
\begin{align*}
& \bP_2=\begin{pmatrix}
1 & 0        & 0    &\sqrt{5}/4\\
0 & \sqrt{2}/2 & 0  &0\\
0 & 0        &\sqrt{3}/2   &0
\end{pmatrix}.
\end{align*}
The reciprocal lattice vectors are
$\bk_1 = (1, 0, 0, 0)^T$, $\bk_2 = (0, 1, 1, 0)^T$ and $\bk_3 = (0, 0, 1, 1)^T$ where $N=1$. When Diophantine parameters $C_a=2$ and $\tau=0.2$,
$\zeta=1,~I_{in}(\blam_j)\neq \emptyset,~(j=2,3),~ d_m=1,~\|\bP_2\|_1=1.$
Then $L_{min}>10$ and $G_{min}>2L_{max}+3/2$. Here, we choose $G_{min}=2L_{max}+10$.

Similarly, we show the relationship between $e(L_j)$ and $L_j$ as well as the best approximation sequence for each dimension. The second and third dimensions are presented in \Cref{fig:casei-eL}. \Cref{fig:caseii-eL} illustrates this relationship for the first dimension in Case (ii).

\begin{figure}[!htbp]
\centering
\includegraphics[width=4.6in]{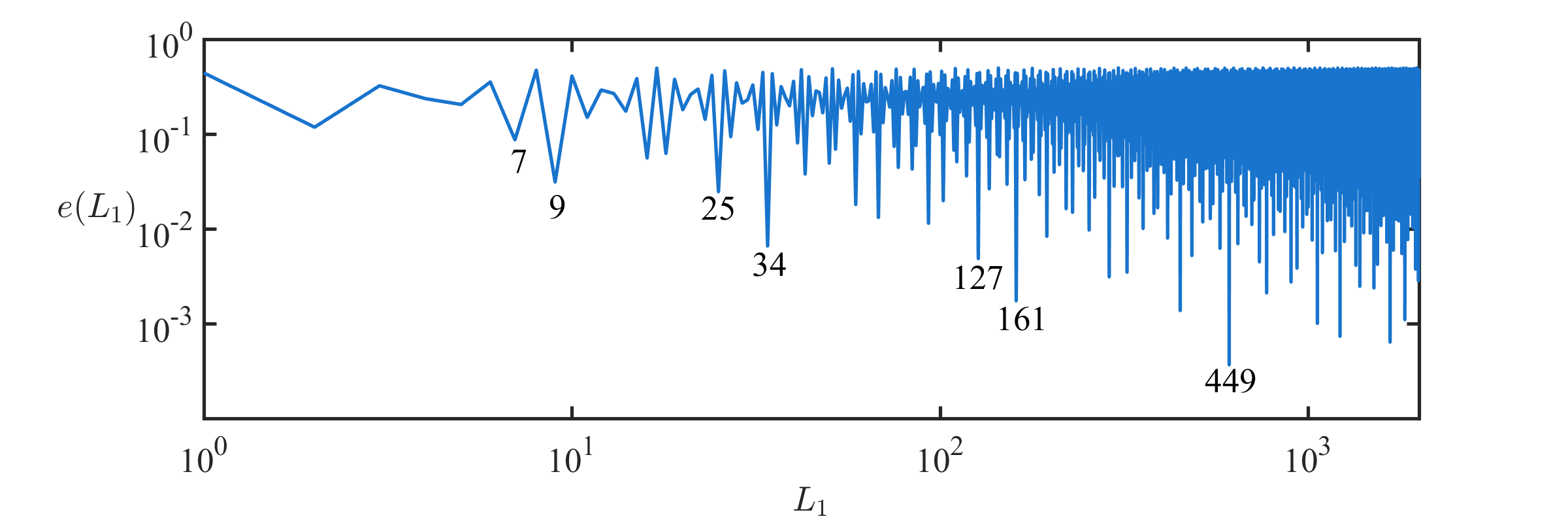}
\caption{In Case (ii), the change of Diophantine approximation error $e(L_1)$ with an increase of $L_1$.}\label{fig:caseii-eL}
\end{figure}

For convenience, let $G_1=G_2=G_3=G_{min}$ in these two cases.
The errors of three-dimensional quasiperiodic functions are presented in \Cref{tab:3}. The table compares three error bounds and clearly demonstrates the consistency of our theoretical findings.
Since the degrees of freedom of Case (i) and Case (ii) are given by $D=G_{min}^3$, the computational cost becomes significantly high as the area of $\bL$ increases. Therefore, we restrict the calculation area to $(0,209]\times(0,239]\times(0,209]$ and $(0,127]\times(0,99]\times(0,209]$ in \Cref{tab:3}, respectively.


\begin{table}[!hptb]
\vspace{-0.4cm}
\centering
\footnotesize{
\caption{Error results of two three-dimensional cases given in \cref{EG:three}.}\label{tab:3}
\renewcommand\arraystretch{1.8}
\setlength{\tabcolsep}{1.4mm}
{\begin{tabular}{|c|c|c|c|c|c|}\hline
The Fourier exponents  &$(L_1,L_2,L_3)$  &$\|\DbV\|_e$& $\varepsilon_0$ &$\varepsilon_1$   & $\varepsilon_2$   \\ \hline
\multirow{4}*{$\bLam_1$ } &(7,17,7)  &1.4517e-01 & 2.9179e-01 &3.0510e-01       &7.3318e-01   \\ \cline{2-6}
&(15,41,15)        &2.7860e-02    & 5.7767e-02               &5.8709e-02       &1.2095e-01          \\ \cline{2-6}
&(97,99,97)         &1.2500e-02       & 2.6158e-02     &  2.6342e-02       &5.3508e-02          \\ \cline{2-6}
&(209,239,209)         &2.8605e-03       &6.0135e-03   &6.0284e-03           &1.2147e-02  \\ \hline
\multirow{3}*{$\bLam_2$ } 
&(25,41,15)    &5.2435e-02    & 4.6980e-02     & 1.1050e-01      &3.1908e-01   \\ \cline{2-6}
&(34,99,97)   &1.9077e-02    & 1.9775e-02     &4.0205e-02       &1.0976e-01    \\ \cline{2-6}
&(127,99,209)   &9.7943e-03  & 6.0208e-03  & 1.9171e-02   &5.6053e-02 \\ \hline
\end{tabular}}
}
\end{table}

\section{Conclusion}
\label{sec:comments}
This paper presents a comprehensive theoretical error analysis of approximating an arbitrary-dimensional quasiperiodic function with a periodic function. The approximation error of this problem includes two parts: rational approximation error and truncation error. If the quasiperiodic function exhibits certain regularity, the rational approximation error dominates the approximation error. Meanwhile, we investigate the approximation rates of both the rational approximation error and the best periodic approximation sequence. Finally, we further verify the correctness of the theoretical analysis by several examples.

There are still many problems worth studying, including applying the PAM to solve the quasiperiodic differential equations/operators and providing the corresponding mathematical analysis based on these results presented here. Furthermore, we will develop the new method to analyze the approximation error of quasiperiodic functions with non-Diophantine frequencies by periodic functions.


\end{document}